\documentclass[11pt,a4paper]{article}

\usepackage[naustrian, english]{babel} 
\usepackage[T1]{fontenc}
\usepackage{lmodern}
\usepackage{textcomp}
\usepackage{amssymb} 
\usepackage{mathtools}
\usepackage[pdftex]{graphicx}
\usepackage[export]{adjustbox}
\usepackage{caption}
\usepackage{color}
\usepackage{subfigure}
\usepackage[arrow, matrix, curve]{xy}
\usepackage[utf8]{inputenc}
\usepackage{url}
\usepackage{hyperref}
\usepackage{float}
\usepackage{chngcntr}
\usepackage{listings}
\usepackage{appendix}
\usepackage{pdfpages}
\usepackage{algorithm}
\usepackage{algorithmic}
\usepackage{geometry}
\usepackage{bbm}
\usepackage{bbold}
\usepackage{amsthm}
\usepackage[shortlabels]{enumitem}
\usepackage{mathtools}
\usepackage{fancyref}

\geometry{a4paper, top=40mm, left=20mm, right=20mm, bottom=30mm,
headsep=10mm, footskip=12mm}

\definecolor{codegreen}{rgb}{0,0.6,0}
\definecolor{codegray}{rgb}{0.5,0.5,0.5}
\definecolor{codepurple}{rgb}{0.58,0,0.82}
 
\lstdefinestyle{mystyle}{
		basicstyle=\tiny,
    commentstyle=\color{codegreen},
    keywordstyle=\color{blue},
    numberstyle=\tiny\color{codegray},
    stringstyle=\color{codepurple},
    breakatwhitespace=false,         
    breaklines=true,                 
    captionpos=b,                    
    keepspaces=true,                 
    numbers=left,                    
    numbersep=5pt,                  
    showspaces=false,                
    showstringspaces=false,
    showtabs=false,                  
    tabsize=2,
}
 
\lstset{style=mystyle}

\newcommand{\R}{\mathbb{R}}
\newcommand{\N}{\mathbb{N}}
\newcommand{\pa}{\partial}

\newcommand{\ve}{\varepsilon}

\newcommand{\vp}{\varphi}

\newcommand{\md}{\mathrm{d}}

\newcommand{\h}{\mathcal{H}}

\newcommand{\prox}{\operatorname{prox}}
\newcommand{\dom}{\operatorname{dom}}
\newcommand{\dtx}{\dot{x}}
\newcommand{\dty}{\dot{y}}
\newcommand{\dtz}{\dot{z}}
\newcommand{\crit}{\operatorname{crit}}
\newcommand{\argmin}{\operatorname{argmin}}
\newcommand{\dist}{\operatorname{dist}}

\newtheorem{theorem}{Theorem}
\newtheorem{lemma}[theorem]{Lemma}
\newtheorem{cor}[theorem]{Corollary}
\theoremstyle{definition}
\newtheorem{Def}{Definition}  
\newtheorem{remark}{Remark}   
\newtheorem{example}{Example}
\newtheorem{ass}{Assumption}

\geometry{a4paper, top=40mm, left=20mm, right=20mm, bottom=30mm,
	headsep=10mm, footskip=12mm}

\usepackage{tikz}
\usetikzlibrary{positioning}
\tikzset{>=stealth}
 
\begin{document}

\title{A forward-backward dynamical approach for nonsmooth problems with block structure coupled by a smooth function}
\author{Radu Ioan Bo\c{t}\thanks{University of Vienna, Faculty of Mathematics, Oskar-Morgenstern-Platz 1, 1090 Vienna, Austria, email: {\tt radu.bot@univie.ac.at}. Research partially supported by the Austrian Science Fund (FWF), project number I 2419-N32.} \and 
	Laura Kanzler\thanks{University of Vienna, Faculty of Mathematics, Oskar-Morgenstern-Platz 1, 1090 Vienna, Austria, email: {\tt laura.kanzler@univie.ac.at}. Research supported by the SFB F65 ``Taming Complexity in Partial Differential Systems'' 
and by the Doctoral Program ``Dissipation and Dispersion in Nonlinear PDEs'', project number W1245, which are funded by the Austrian Science Fund (FWF).}}
\maketitle

\begin{abstract}
	
In this paper we aim to minimize the sum of two nonsmooth (possibly also nonconvex) functions in separate variables connected by a smooth coupling function. To tackle this problem we chose a continuous forward-backward approach 
and introduce a dynamical system which is formulated by means of the partial gradients of the smooth coupling function and the proximal point operator of the two nonsmooth functions. Moreover, we consider variable rates of implicitness
of the resulting system. We discuss the existence and uniqueness of a solution and carry out the asymptotic analysis of its convergence  behaviour to a critical point of the optimization problem, when a regularization of the objective function fulfills 
the Kurdyka-\L ojasiewicz property.  We further provide convergence rates for the solution trajectory in terms of the \L ojasiewicz exponent. We conclude this work with numerical simulations which confirm and validate the analytical results.
\end{abstract}

\begin{keywords}
block-coordinate minimization, forward-backward dynamical system, asymptotic analysis, Kurdyka-\L ojasiewicz property
\end{keywords} \vspace{1ex}

\begin{subjectclassification}
34G25, 37N40, 49J52, 90C26, 90C56
\end{subjectclassification} 

\section{Introduction}
We consider a block-structured optimization problem of the form
\begin{equation}\label{p}
\min_{(x,y) \in \R^n \times \R^m} \Psi(x,y) := f(x)+g(y)+H(x,y),
\end{equation}
where the following general assumptions on the functions $f$, $g$ and $H$ are made: 
\begin{ass}\label{A1}
\begin{itemize}
		\item $f: \R^n \to \overline{\R}$ and $g: \R^m \to \overline{\R}$ are proper and lower semicontinuous functions, where we denote $\overline{\R}:=\R \cup \{+\infty\}$, with $\inf_{\R^n}{f}> -\infty$ and  $\inf_{\R^m}{g}> -\infty$;
		\item $H \in C^1( \R^n \times \R^m)$, $\nabla H$ is Lipschitz continuous with constant $L$, and $\inf_{\R^n \times \R^m} H > -\infty$.
	\end{itemize}
\end{ass} 
We approach solving the optimization problem (\ref{p}) by associating to it the dynamical system
\begin{equation}\label{dynpalm}
\begin{cases}
\dtx(t)+x(t) \in \prox_{\frac{1}{\gamma_1 L}f}\left(x(t)-\frac{1}{\gamma_1 L}\nabla_xH(x(t), (1-\mu)(\dty(t)+y(t))+\mu y(t))\right)\\
\dty(t)+y(t)\in\prox_{\frac{1}{\gamma_2 L}g}\left(y(t)-\frac{1}{\gamma_2 L}\nabla_yH((1-\lambda)(\dtx(t)+x(t))+\lambda x(t)), y(t))\right) \\
(x(0),y(0))=(x_0,y_0),
\end{cases}
\end{equation}
where $\lambda, \mu \in [0,1]$ and the proximal map $\prox_{\alpha h}: \R^d \to {\cal P}(\R^d)$ for a proper and lower semicontinuous function $h: \R^d \to \bar{\R}$ and $\alpha>0$ is defined as (see \cite[Definition 1.22]{rocky})
$$\prox_{\alpha h}(x) := \argmin_{\substack{y \in \R^d}}\left\{h(y)+\frac{1}{2\alpha} \|y-x\|^2\right\}.$$
We chose the stepsizes $\frac{1}{\gamma_1 L}$ and $\frac{1}{\gamma_2 L}$ in dependence of the Lipschitz constant of the gradient of $H$ for some suitable $\gamma_1, \gamma_2 >0$. 

In \cite{bol}, Bolte, Sabach and Teboulle established the PALM (proximal alternating linearized minimization) algorithm for minimizing optimization problems of the form (\ref{p}). This work served as key motivation to study \eqref{dynpalm}, since an explicit time discretization for the case $\mu=1$ and $\lambda=0$ provides a direct correspondence between the ODE system and the PALM algorithm. In this special case the system (\ref{dynpalm}) is of the form
\begin{equation}\label{dynpalmspecial}
\begin{cases}
\dtx(t)+x(t) \in \prox_{\frac{1}{\gamma_1 L}f}\left(x(t)-\frac{1}{\gamma_1 L}\nabla_xH(x(t), y(t))\right) \\
\dty(t)+y(t)\in\prox_{\frac{1}{\gamma_2 L}g}\left(y(t)-\frac{1}{\gamma_2 L}\nabla_yH(\dtx(t)+x(t), y(t))\right) \\
(x(0),y(0))=(x_0,y_0).
\end{cases}
\end{equation}
Explicit time discretization of the first component of \eqref{dynpalmspecial} has the form
\begin{align}
x^{k+1} \in \prox_{\frac{1}{\gamma_1 L}f}\left(x^k-\frac{1}{\gamma_1 L}\nabla_xH(x^k, y^k)\right) \quad \forall k \geq 0,
\end{align}
where the time stepsize was set equal to one. On the other hand, explicit time discretization of the second component with stepsize one gives
\begin{align}
y^{k+1} \in \prox_{\frac{1}{\gamma_2 L}g}\left(y^k-\frac{1}{\gamma_2 L}\nabla_yH(x^{k+1}, y^k)\right) \quad \forall k \geq 0.
\end{align}
Therefore, we obtain the PALM algorithm for starting point $(x^0,y^0):=(x_0,y_0)$ from the ODE system (\ref{dynpalmspecial}). In \cite{bol} it was proved that the limit points of $(x^k,y^k)_{k \geq 0}$ are critical points of the optimization problem \eqref{p}. In addition, if the objective function satisfies the Kurdyka-\L ojasiewicz property and $(x^k,y^k)_{k \geq 0}$ is bounded, then the sequence converges to a critical point of \eqref{p}.

In case $\mu = \lambda =1$, explicit time discretization with stepsize one gives
$$x^{k+1} \in \prox_{\frac{1}{\gamma_1 L}f}\left(x^k-\frac{1}{\gamma_1 L}\nabla_xH(x^k, y^k)\right), \quad  y^{k+1} \in \prox_{\frac{1}{\gamma_2 L}g}\left(y^k-\frac{1}{\gamma_2 L}\nabla_yH(x^k, y^k)\right) \quad \forall k \geq 0,$$
which is the the preconditioned forward-backward algorithm for solving \eqref{p}. The forward-backward algorithm has been investigated in the fully nonconvex setting in \cite{att3, bot3}.

The approach of convex optimization problems and monotone inclusions from the perspective of dynamical systems has a long tradition, starting with the contributions of Crandall and Pazy \cite{cran} as well as Baillon, Brézis \cite{bai}, \cite{brez}, and Bruck \cite{bru}. 
In these works dynamical systems formulated as monotone inclusions and governed by subdifferential operators or, more general, maximal monotone operators in Hilbert spaces are investigated. 
The question of existence and uniqueness of trajectories generated by these dynamical systems is usually investigated in the framework of the Cauchy-Lipschitz Theorem, while the asymptotic convergence behavior of solutions to a minimizer of the convex optimization problem or a zero point 
of the governing maximal monotone operator builds on Lyapunov analysis. 

In the last years, dynamical systems of implicit type approaching monotone inclusions/convex optimization problems, namely, defined by means of resolvent/proximal operators have enjoyed much attention. Here, the pioneering works are by  by Antipin \cite{ant}, 
Bolte \cite{bol1}, and Abbas, Attouch and Svaiter \cite{abb1, abb, att2}. A general approach for addressing implicit dynamical systems was considered in \cite{bot1}, a dynamical system of forward-backward-forward type was matter of investigation in \cite{ban}, 
one of Douglas-Rachford type in \cite{cse}, while a primal-dual dynamical system approaching structured convex minimization problems was recently introduced in \cite{bot2}.

In what concerns nonconvex optimization problems, we want to mention that the problem of minimizing a general smooth function has been approached form the perspective of first-order and second-order gradient type dynamical systems in Simon \cite{sim}, Haraux and Jendoubi \cite{haraux2}, and Alvarez, 
Attouch, Bolte and Redont \cite{al}. In addition, proximal-gradient type dynamical systems of first-order and second-order have been investigated in \cite{bot} and \cite{bot4}, respectively, in relation to the minimization of the sum of a proper, convex and lower semicontinuous function and a general smooth function. 

The outline of this paper is the following: after some preliminaries, we will address in Section \ref{sec3} the question of existence and uniqueness of strong global solutions to \eqref{dynpalm}, 
which will be provided for the case $\mu=1$ and $f$ and $g$ are convex functions and in the framework of the global Picard-Lindel\"of Theorem. Section \eqref{sec4} is dedicated to the asymptotic analysis of the trajectories of the general system \eqref{dynpalm} towards a critical point of the objective 
function of \eqref{p}, expressed as a zero of the limiting subdifferential. To this end we will use three main ingredients (see \cite{att3, bol} for a similar approach in discrete and \cite{al, bot} in continuous time): (1) we will prove sufficient decrease of a regularized objective function along the trajectories; (2) we will show existence of a subgradient lower bound of the solution trajectories; (3) we will obtain convergence of the solution by taking use of the Kurdyka-\L ojasiewicz (KL) property of the objective function. Functions satisfying the KL property build a large class of functions and include semi-algebraic functions and functions having analytic features. We close our investigations by establishing convergence rates for the trajectories expressed in terms of the \L ojasiewicz exponent of the regularized objective function. We will conclude by confirming and validating the analytical results through numerical simulations.

\section{Preliminaries} 

For $d \geq 1$, we consider on $\R^d$ the Euclidean scalar product and the induced norm. These are denoted by $\langle \cdot, \cdot \rangle$ and $\| \cdot \|$, respectively, independently from the value of $d$, since confusion is not possible.

Let $h: \R^d \to \overline{\R}$ be a given function. Its \textit{effective domain} is defined as $\dom{h}:=\{x \in \R^d: \; h(x) < +\infty\}$ and we say that $h$ is \textit{proper}, if $\dom h \neq \emptyset$. 

\begin{Def}
	\begin{itemize}
		\item Let $h: \: \R^d \to \overline{\R}$ be proper and lower semicontinuous. The \textit{Fr\'{e}chet subdifferential} of $h$ at $x \in \dom h$ is defined as 
		$$\pa_F h(x) := \left\{\xi \in \R^d: \; \underset{z \to x}{\lim \inf} \frac{h(z)-h(x)- \langle \xi, z-x \rangle}{\|z-x\|} \geq 0 \right\}.$$
		For $x \notin  \dom h$, we set by convention $\pa_F h(x) := \emptyset$. 
		\item The \textit{limiting (Mordukhovich) subdifferential} of $h$ at $x \in \dom h$ (see \cite{mor}) is then defined as the sequential closure of $\pa_F h(x)$ in the following way
		$$\pa h(x) := \left\{\xi \in \R^d; \; \exists x_k \to x, \; h(x_k) \to h(x) \text{ and } \pa_F h(x_k)\ni \xi_k \to \xi \text{, as } k \to \infty \right\}.$$
For $x \notin  \dom h$, we set by convention $\pa h(x) := \emptyset$.  We denote by $\dom \pa h := \{x \in \R^d: \partial h(x) \neq \emptyset\}$ the \textit{domain} of the limiting subdifferential $\pa h$.
	\end{itemize}
\end{Def}

\begin{remark}
	\begin{enumerate}[i)]
		\item From the definition it follows that for all $x \in \R^d$ it holds $\pa_F h(x) \subseteq \pa h(x)$. While $\pa h(x)$ is closed, $\pa_F h(x)$ is convex and closed (\cite[Theorem 8.6]{rocky}). 
		\item With $\crit (h) := \{x \in \R^d: \; 0 \in \pa h(x)\}$ we denote the set of \textit{(limiting-)critical points of $h$}. Also in our nonsmooth setting \textit{Fermat's theorem} holds, i.e. if $x \in \R^d$ is a local minimizer of $h$, then $0 \in \pa h(x)$, i.e. $x \in \crit (h)$.
		\item Should $h$ be continuously differentiable at $x \in \R^n$, then we have $\pa h(x) = \{\nabla h(x)\}$.
		\item For the sum of a proper and lower semicontinuous function $h:\R^d \to \overline{\R}$  and a  continuously differentiable function $k : \R^d \rightarrow \R$ it holds $\pa (h+k)(x) = \pa h(x) +\nabla k(x)$ for every $x \in \R^d$ (\cite{rocky}). 
	\end{enumerate}
\end{remark}

\begin{remark}
	\begin{enumerate}[i)]
		\item The proximal operator of a nonconvex function is in general a set-valued map. For a proper and lower semicontinuous function $h:\R^d \to \overline{\R}$ with $\inf h > -\infty$ we have that for every $\alpha > 0$ and every $x \in \R^n$ the set $\prox_{\alpha f}(x)$ is nonempty and compact.
		\item In our considerations below, we will use the following property connecting the \textit{proximal map} with the \emph{limiting subdifferential}. For every proper and lower semicontinuous function $h: \R^d \to \overline{\R}$ and constant $\alpha>0$ it holds
		\begin{align} \label{prox1}
		p \in \prox_{\alpha h}(x) \Rightarrow \frac{1}{\alpha} (x - p) \in \pa h(p) \quad \forall x \in \R^d.
		\end{align}
	\end{enumerate}
\end{remark}

Since we will have to deal with locally absolutely continuous solution trajectories of (\ref{dynpalm}), we recall the following definition.

\begin{Def}
	A function $x:[0,+\infty) \to \R^n$ is called \emph{locally absolutely continuous} if $x:[0,T] \to \R^n$  is absolutely continuous  for all $T >0$, namely, there exists an integrable function $y: [0,T] \to \R^n$ such that
		$$x(t)=x(0) + \int_0^t y(s) \;\md s \quad \forall t \in [0,T].$$
\end{Def}
We point out two important properties of absolute continuous trajectories, which will be of much use later on.
\begin{remark}\label{ac}
	\begin{enumerate}[i)]
		\item An absolutely continuous function is almost everywhere (a.e.) differentiable, its derivative coincides a.e. with its distribution derivative, and by the above integration formula it is possible to recover the function from its derivative.
		\item For an absolutely continuous function $x: [0,T] \to \R^d$, where $T>0$, and a Lipschitz continuous function $K$ with Lipschitz constant $\beta \geq 0$, the composition $z:=K \circ x$ is also absolutely continuous.  
		Furthermore, $z$ is differentiable a.e. in $[0,T]$ and the inequality $\|\dtz(t)\| \leq \beta \|\dtx(t)\|$ holds for a.e. $t \in [0,T]$. 
	\end{enumerate}
\end{remark}

Next, we introduce the notion of solution we will mostly deal with.  
\begin{Def}\label{strsol}
	We say that $z:=(x,y): \; [0,+\infty) \to \R^n \times \R^m$ is a \emph{strong global solution} of (\ref{dynpalm}), if it satisfies the following properties:
	\begin{enumerate}[a)]
		\item the functions $x$ and $y$ are locally absolutely continuous;
		\item $$\dtx(t)+x(t) \in \prox_{\frac{1}{\gamma_1 L}f}\left(x(t)-\frac{1}{\gamma_1 L}\nabla_xH(x(t), (1-\mu)(\dty(t)+y(t))+\mu y(t))\right)$$ and $$\dty(t)+y(t)\in\prox_{\frac{1}{\gamma_2 L}g}\left(y(t)-\frac{1}{\gamma_2 L}\nabla_yH((1-\lambda)(\dtx(t)+x(t))+\lambda x(t)), y(t))\right)$$ 
		for a.e. $t \in [0,+\infty)$;
\item the functions $t \mapsto f(\dtx(t) + x(t))$ and $t \mapsto g(\dty(t) + y(t))$ are locally absolutely continuous;
		\item $(x(0),y(0)) = (x_0,y_0)$.
	\end{enumerate}
\end{Def}

Also the following two results for locally absolutely continuous functions will be crucial for the asymptotic analysis of the trajectories of (\ref{dynpalm}) (see for example \cite{abb}).

\begin{lemma}\label{fejer1}
	Suppose that $F: [0,+\infty) \to \R$ is locally absolutely continuous and bounded from below. Furthermore, assume that $\exists \; G \in L^1([0,+\infty);\R)$ such that for a.e. $t \in [0,+\infty)$
	$$\frac{\md}{\md t} F(t) \leq G(t).$$
	Then $\exists \lim_{t \to +\infty} F(t) \in \R$.
\end{lemma}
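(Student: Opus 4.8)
The plan is to prove Lemma~\ref{fejer1} by a standard argument combining integrability of the negative part of $\frac{\md}{\md t}F$ with monotone convergence. First I would introduce the auxiliary function
\[
\Phi(t) := F(t) - \int_0^t G(s)\,\md s,
\]
which is locally absolutely continuous as the difference of such functions, and observe that for a.e.\ $t$ we have $\frac{\md}{\md t}\Phi(t) = \frac{\md}{\md t}F(t) - G(t) \le 0$. Since an absolutely continuous function is the integral of its a.e.\ derivative (Remark~\ref{ac} i)), $\Phi$ is nonincreasing on $[0,+\infty)$. Hence $\lim_{t \to +\infty}\Phi(t)$ exists in $\R \cup \{-\infty\}$.

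Next I would rule out the value $-\infty$. Because $G \in L^1([0,+\infty);\R)$, the quantity $\int_0^t G(s)\,\md s$ converges as $t \to +\infty$ to a finite limit, and in particular $\sup_{t \ge 0}\left|\int_0^t G(s)\,\md s\right| =: C < +\infty$. Combining this with the assumed lower bound $F(t) \ge m$ for all $t$, we get $\Phi(t) = F(t) - \int_0^t G(s)\,\md s \ge m - C > -\infty$, so the nonincreasing function $\Phi$ is bounded from below and therefore $\ell := \lim_{t \to +\infty}\Phi(t) \in \R$. Finally, writing $F(t) = \Phi(t) + \int_0^t G(s)\,\md s$ and letting $t \to +\infty$, both summands converge to finite limits, so $\lim_{t \to +\infty}F(t) = \ell + \int_0^{+\infty}G(s)\,\md s \in \R$, which is the claim.

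There is essentially no serious obstacle here; the only point requiring a little care is the justification that $\Phi$ is nonincreasing. One must invoke that a locally absolutely continuous function satisfies $\Phi(t_2) - \Phi(t_1) = \int_{t_1}^{t_2}\frac{\md}{\md t}\Phi(s)\,\md s$ for $0 \le t_1 \le t_2$ (the fundamental theorem of calculus for absolutely continuous functions, already recalled in Remark~\ref{ac} i)), so that the a.e.\ sign condition $\frac{\md}{\md t}\Phi \le 0$ transfers to genuine monotonicity despite the derivative only being controlled almost everywhere. The convergence of $\int_0^t G\,\md s$ is immediate from $G \in L^1$ via the dominated (or monotone) convergence theorem applied to $\int_0^{+\infty}|G|\,\md s < +\infty$. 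With these two standard facts in hand the proof is a two-line assembly.
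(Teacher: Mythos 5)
Your argument is correct: the shift $\Phi(t)=F(t)-\int_0^t G(s)\,\md s$ is locally absolutely continuous with a.e.\ nonpositive derivative, hence nonincreasing by the fundamental theorem of calculus for absolutely continuous functions, and it is bounded below because $F$ is and because $G\in L^1$ makes $t\mapsto\int_0^t G(s)\,\md s$ bounded and convergent; adding the convergent integral back gives the finite limit of $F$. The paper itself does not prove this lemma but only cites \cite{abb}, and your proof is precisely the standard argument used there, so there is nothing to flag.
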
 

\begin{lemma}\label{fejer2}
	If $1 \leq p<\infty$, $1\leq r \leq \infty$, $F: [0,+\infty) \to [0,+\infty)$ is locally absolutely continuous, $F \in L^p([0,+\infty);\R)$, $G \in L^r([0,+\infty);\R)$ and for a.e. $t \in [0,+\infty)$ 
	$$\frac{\md}{\md t} F(t)\leq G(t)$$
	holds, then $\lim_{t \to +\infty}F(t)=0$.
\end{lemma}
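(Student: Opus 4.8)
The plan is to argue by contradiction, combining three facts: that $F \in L^p$ with $p$ finite forces $F$ to come back arbitrarily close to $0$; that the differential inequality $\frac{\md}{\md t}F \le G$ controls the \emph{upward} excursions of $F$ by local integrals of $G$; and that $G \in L^r$ together with Hölder's inequality makes those integrals small over short time intervals. So suppose $\lim_{t\to+\infty}F(t) \ne 0$; since $F \ge 0$, there are then $\varepsilon > 0$ and an unbounded set of times at which $F \ge \varepsilon$.

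First I would record that $\liminf_{t\to+\infty}F(t) = 0$: otherwise $F(t) \ge c/2$ for some $c > 0$ and all large $t$, whence $\int^{+\infty}F^p = +\infty$, contradicting $F \in L^p$. Using this I would choose $s_1 < s_2 < \cdots$ with $s_k \to +\infty$, $F(s_k) \ge \varepsilon$, and moreover with $F$ dipping below $\varepsilon/2$ somewhere before $s_1$ and in each interval $(s_{k-1}, s_k)$. Setting $u_k := \sup\{t \le s_k : F(t) \le \varepsilon/2\}$ and using that $F$ is continuous (being locally absolutely continuous), I obtain $u_k < s_k$ (and $u_k > s_{k-1}$ for $k \ge 2$), $F(u_k) = \varepsilon/2$, and $F(t) > \varepsilon/2$ for all $t \in (u_k, s_k]$; consequently the intervals $(u_k, s_k]$ are pairwise disjoint.

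Next I would extract two competing inequalities. From disjointness and $0 \le F \in L^p$,
$$\|F\|_{L^p([0,+\infty);\R)}^p \;\ge\; \sum_k \int_{u_k}^{s_k} F(\tau)^p\,\md\tau \;\ge\; \Big(\tfrac{\varepsilon}{2}\Big)^p \sum_k (s_k - u_k),$$
so the series converges and hence $s_k - u_k \to 0$. On the other hand, integrating $\frac{\md}{\md t}F \le G \le |G|$ over $[u_k, s_k]$ and invoking the fundamental theorem of calculus for absolutely continuous functions (Remark \ref{ac}),
$$\tfrac{\varepsilon}{2} \;\le\; F(s_k) - F(u_k) \;=\; \int_{u_k}^{s_k}\dot{F}(\tau)\,\md\tau \;\le\; \int_{u_k}^{s_k}|G(\tau)|\,\md\tau .$$
To finish I would bound the last integral and derive a contradiction, treating the three regimes of $r$ separately: if $1 < r < \infty$, Hölder gives $\int_{u_k}^{s_k}|G| \le (s_k - u_k)^{1 - 1/r}\|G\|_{L^r} \to 0$; if $r = \infty$, then $\int_{u_k}^{s_k}|G| \le (s_k - u_k)\|G\|_{L^\infty} \to 0$; and if $r = 1$, the disjointness of the intervals $[u_k, s_k]$ together with $G \in L^1$ forces $\int_{u_k}^{s_k}|G| \to 0$. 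In each case this contradicts $\tfrac{\varepsilon}{2} \le \int_{u_k}^{s_k}|G|$, which proves $\lim_{t\to+\infty}F(t) = 0$.

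I expect the genuinely delicate part to be not the estimates — these are only the fundamental theorem of calculus and Hölder's inequality — but the careful construction of the disjoint ``crossing'' intervals $(u_k, s_k]$ on which $F$ remains above $\varepsilon/2$; this is exactly where $\liminf_{t\to+\infty}F(t) = 0$ is used, in order to interleave the $s_k$ with dips of $F$ below $\varepsilon/2$. Beyond that one must only keep the three endpoint regimes $r = 1$, $1 < r < \infty$, $r = \infty$ straight. Once the intervals are in place, the argument is simply the conflict between ``$F \in L^p$ makes them short in total length'' and ``$\frac{\md}{\md t}F \le G$ with $G \in L^r$ prevents short intervals from accommodating an upward increment of size $\varepsilon/2$''.
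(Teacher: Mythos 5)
Your proof is correct. Note that the paper itself does not prove Lemma \ref{fejer2}: it is stated as a known auxiliary result with a pointer to \cite{abb}, so there is no internal argument to compare against; your write-up supplies a self-contained proof of exactly the statement the paper uses. The construction is sound at the points that matter: the negation of the conclusion together with $F\ge 0$ gives $\limsup_{t\to+\infty}F(t)\ge\varepsilon>0$, while $F\in L^p$ with $p<\infty$ forces $\liminf_{t\to+\infty}F(t)=0$, which is what lets you interleave the times $s_k$ with dips below $\varepsilon/2$; continuity of $F$ (from local absolute continuity) justifies $F(u_k)=\varepsilon/2$ and $F>\varepsilon/2$ on $(u_k,s_k]$; disjointness plus $F\in L^p$ gives $s_k-u_k\to 0$; the fundamental theorem of calculus for absolutely continuous functions on $[u_k,s_k]$ gives $\varepsilon/2\le\int_{u_k}^{s_k}|G|$; and your three-way split in $r$ is handled correctly, in particular recognizing that H\"older is vacuous at $r=1$ and replacing it there by disjointness (or, equivalently, the tail estimate $\int_{u_k}^{+\infty}|G|\to 0$ since $u_k\to+\infty$). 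This crossing-interval contradiction is the standard way such ``continuous quasi-Fej\'er'' lemmas are proved in the literature the paper cites, so nothing further is needed.
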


\section{Existence and uniqueness of a trajectory}\label{sec3}

The aim of this section is to provide a setting in which the existence and uniqueness of a trajectory of the the dynamical system (\ref{dynpalm}) is guaranteed.  To this end:
\begin{itemize}
	\item we ask the nonsmooth functions $f$ and $g$ to be in addition \emph{convex}. We recall that the proximal operator of a proper, convex and lower semicontinuous function is single-valued and non-expansive (see, e.g. \cite[Proposition 12.28]{bc}).
	\item we choose the parameter $\mu = 1$, while $\lambda \in [0,1]$ remains general. 
\end{itemize}
The above setting allows us to rewrite the dynamical system (\ref{dynpalm}) in an explicit form  as
\begin{equation}\label{dynpalmmu1}
\begin{cases}
\dtx(t)+x(t)=\prox_{\frac{1}{\gamma_1 L}f}\left(x(t)-\frac{1}{\gamma_1 L}\nabla_xH(x(t), y(t))\right) \\
\dty(t)+y(t)=\prox_{\frac{1}{\gamma_2 L}g}\left(y(t)-\frac{1}{\gamma_2 L}\nabla_yH((1-\lambda)(\dtx(t)+x(t))+\lambda x(t)), y(t))\right)\\
(x(0),y(0))=(x_0,y_0).
\end{cases}
\end{equation}
In the following, we will show that the operator $\Gamma: \R^n \times \R^m \to  \R^n \times \R^m$, $\Gamma(x,y):=(u,v)$, where 
$$\begin{pmatrix} u \\ v \end{pmatrix} = \begin{pmatrix}
	\prox_{\frac{1}{\gamma_1 L}f}\left(x-\frac{1}{\gamma_1 L}\nabla_xH(x,y)\right) -x \\
	\prox_{\frac{1}{\gamma_2 L}g}\left(y-\frac{1}{\gamma_2 L}\nabla_yH((1-\lambda)(u+x)+\lambda x,y)\right) -y
\end{pmatrix},$$
is Lipschitz continuous, from which we will conclude existence and uniqueness of a solution to (\ref{dynpalmmu1}) by applying the global version of the Picard-Lindelöf Theorem (see, e.g.  (\cite[Theorem 2.2]{teschl}). 

\begin{theorem}\label{existence}
	Let $f$, $g$ and $H$ fulfill Assumption (\ref{A1}) and let further $f$ and $g$ be convex. Then for every starting point $(x_0,y_0) \in \R^n \times  \R^m$ the dynamical system (\ref{dynpalm}) for $\mu=1$ and $\lambda \in [0,1]$ has a unique strong global solution $z=(x,y) : [0,+\infty) \rightarrow \R^n \times  \R^m$,  
	which is in addition continuously differentiable.	
\end{theorem}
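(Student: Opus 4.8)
The plan is to reformulate the system \eqref{dynpalmmu1} as an autonomous ODE $\dot z(t) = \Gamma(z(t))$, $z(0) = (x_0,y_0)$, and to verify the hypotheses of the global Picard--Lindelöf theorem, i.e.\ that $\Gamma: \R^n\times\R^m \to \R^n\times\R^m$ is (globally) Lipschitz continuous. Once this is established, the theorem yields a unique $C^1$ solution $z = (x,y)$ defined on all of $[0,+\infty)$; one then checks afterwards that this solution satisfies all the items a)--d) of Definition \ref{strsol} — items a) and d) being immediate, item b) holding because $\mu=1$ makes the proximal inclusions single-valued equalities that coincide with $\dot z = \Gamma(z)$, and item c) following from Remark \ref{ac} ii) since $t\mapsto \dot x(t)+x(t)$ is continuous and $f$ (being convex and finite on the relevant range, or more carefully, using that $t \mapsto f(\dot x(t)+x(t))$ is locally Lipschitz as a composition) — actually for item c) I would invoke that the argument of $f$ is itself $C^1$ and that $f\circ(\text{prox-expression})$ can be controlled; this is the kind of routine verification I would defer.

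The substantive step is the Lipschitz estimate for $\Gamma$. First I would check that the first component map $x \mapsto u(x,y) := \prox_{\frac{1}{\gamma_1 L}f}\big(x - \frac{1}{\gamma_1 L}\nabla_x H(x,y)\big) - x$ is Lipschitz jointly in $(x,y)$: the inner map $(x,y)\mapsto x - \frac{1}{\gamma_1 L}\nabla_x H(x,y)$ is Lipschitz with some constant depending on $L$ and $\gamma_1$ (using Lipschitz continuity of $\nabla H$), $\prox_{\frac{1}{\gamma_1 L}f}$ is nonexpansive by convexity of $f$, and subtracting $x$ is Lipschitz; so $u$ is Lipschitz with an explicit constant, say $\ell_1$. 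Then the second component involves $v(x,y) = \prox_{\frac{1}{\gamma_2 L}g}\big(y - \frac{1}{\gamma_2 L}\nabla_y H((1-\lambda)(u+x)+\lambda x, y)\big) - y$, whose argument now depends on $(x,y)$ both directly and through $u(x,y)$; since $u$ is already known to be Lipschitz, the composition $(x,y) \mapsto (1-\lambda)(u(x,y)+x)+\lambda x$ is Lipschitz, hence so is the inner map, hence (again by nonexpansiveness of $\prox_{\frac{1}{\gamma_2 L}g}$) so is $v$. Combining, $\Gamma$ is Lipschitz.

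The main obstacle I anticipate is purely bookkeeping rather than conceptual: because $\mu=1$ decouples the implicitness in the first line (the right-hand side of the first equation does not see $\dot y$) but $\lambda$ keeps the second line implicit in $\dot x + x = u + x$, one must order the estimates so that the bound on $u$ is obtained \emph{first} and then fed into the bound on $v$; a naive attempt to Lipschitz-estimate the whole vector field simultaneously would appear circular. Tracking the constants ($L$, $\gamma_1$, $\gamma_2$, $\lambda$) through the two-stage estimate is the tedious part, but there is no genuine difficulty since every building block — the gradient Lipschitz bound from Assumption \ref{A1}, nonexpansiveness of the proximal operators of the convex functions $f,g$ — is available. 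I would present the two-stage Lipschitz computation in a couple of displayed inequalities, then cite \cite[Theorem 2.2]{teschl} for existence, uniqueness and the $C^1$ regularity of $z$, and finish with the short verification that $z$ is a strong global solution in the sense of Definition \ref{strsol}.
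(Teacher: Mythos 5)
Your reduction of the theorem to the global Lipschitz continuity of $\Gamma$, the two-stage estimate (first the $u$-component, then the $v$-component which depends on $u$), and the appeal to the global Picard--Lindel\"of theorem is exactly the paper's route, and it delivers items a), b) and d) of Definition \ref{strsol} as you say; the ordering issue you worry about is harmless, since for $\mu=1$ the first component does not involve $v$ at all.

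The genuine gap is item c). You propose to obtain local absolute continuity of $t \mapsto f(\dtx(t)+x(t))$ from the smoothness of the inner curve together with Remark \ref{ac} ii) or some local Lipschitz control of $f$ ``on the relevant range''; you defer this as routine. It is not: under Assumption \ref{A1} the functions $f$ and $g$ are only proper, convex and lower semicontinuous, hence extended-real-valued and in general not locally Lipschitz along the trajectory (think of an indicator function, or of a convex function whose slope blows up at the boundary of $\dom f$, which the curve $\dtx(t)+x(t)$ may touch, since the prox inclusion only guarantees $\dtx(t)+x(t)\in\dom f$). So Remark \ref{ac} ii) is not applicable to $f\circ(\dtx+x)$, and continuity of the argument alone proves nothing. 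The paper closes this point with a specific argument you are missing: since $\dtz=\Gamma\circ z$ with $\Gamma$ Lipschitz and $z\in C^1$, Remark \ref{ac} ii) applied to $\dtz$ (not to $f$) shows that $\dtz$ is locally absolutely continuous and $\|(\ddot{x}(t),\ddot{y}(t))\|\leq\beta(\lambda,\gamma_1,\gamma_2)\|(\dtx(t),\dty(t))\|$ a.e.; hence on every $[0,T]$ the arc $t\mapsto \dtx(t)+x(t)$ and its derivative lie in $L^2$, and the prox characterization \eqref{prox1} furnishes the explicit subgradient selection $-\gamma_1 L\,\dtx(t)-\nabla_x H(x(t),y(t))\in\pa f(\dtx(t)+x(t))$, which is in $L^2([0,T],\R^n)$ because $\nabla H$ is continuous. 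With these three $L^2$ properties one invokes \cite[Lemma 3.3]{brez} (absolute continuity, and the chain rule, for a convex function along an $H^1$ arc admitting an $L^2$ subgradient selection) to conclude that $t\mapsto f(\dtx(t)+x(t))$, and likewise $t\mapsto g(\dty(t)+y(t))$, is locally absolutely continuous. Without this step item c) of Definition \ref{strsol} remains unproved, and it is the only substantive part of the theorem beyond the Lipschitz bookkeeping.
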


\begin{proof}
Let $(x,y), (\tilde{x},\tilde{y}) \in \R^n \times  \R^m$. We denote $(u,v):=\Gamma(x,y)$, $(\tilde{u},\tilde{v}):=\Gamma(\tilde{x},\tilde{y})$ and estimate 
\begin{align*}
 & \ \left\| u - \tilde{u} \right\|^2 = \left\| \prox_{\frac{1}{\gamma_1 L}f}\left(x-\frac{1}{\gamma_1 L}\nabla_xH(x,y)\right) -x -\prox_{\frac{1}{\gamma_1 L}f}\left(\tilde{x}-\frac{1}{\gamma_1 L}\nabla_xH(\tilde{x},\tilde{y})\right) + \tilde{x} \right\|^2 \\
 \leq & \ 2\left\| x-\tilde{x} \right\|^2 + 2\left\| x-\frac{1}{\gamma_1 L}\nabla_xH(x,y) +\frac{1}{\gamma_1 L}\nabla_xH(\tilde{x},\tilde{y}) -\tilde{x}\right\|^2\\
 \leq & \ 6 \left\| x-\tilde{x} \right\|^2 + \frac{4}{\gamma_1^2 L^2} \left\| \nabla_xH(x,y) - \nabla_xH(\tilde{x},\tilde{y}) \right\|^2 \\
 \leq & \ 6 \left\| x-\tilde{x} \right\|^2 + \frac{4}{\gamma_1^2 } \left(\left\| x-\tilde{x} \right\|^2 + \left\|y-\tilde{y}\right\|^2 \right) \\
 = & \ \left(6+\frac{4}{\gamma_1^2}\right) \left\| x-\tilde{x} \right\|^2 +\frac{4}{\gamma_1^2 } \left\|y-\tilde{y}\right\|^2,
\end{align*}
where we first used the \textit{non-expansiveness} (i.e. 1-Lipschitz continuity) of the proximal operator, while the last inequality is due to the $L$-Lipschitz continuity of $\nabla H$. Similarly we obtain 
\begin{align*}
& \ \left\| v - \tilde{v} \right\|^2 \\
\leq & \ 2\left\| y-\tilde{y} \right\|^2 + 2\left\| y-\frac{1}{\gamma_2 L}\nabla_yH((1-\lambda)(u+x)+\lambda x,y) +\frac{1}{\gamma_2 L}\nabla_yH((1-\lambda)(\tilde{u}+\tilde{x})+\lambda \tilde{x},\tilde{y}) +\tilde{y} \right\|^2\\
\leq & \ 6 \left\| y-\tilde{y} \right\|^2 + \frac{4}{\gamma_2^2 L^2} \left\| \nabla_yH((1-\lambda)(u+x)+\lambda x,y) - \nabla_yH((1-\lambda)(\tilde{u}+\tilde{x})+\lambda \tilde{x},\tilde{y}) \right\|^2 \\
 \leq & \ 6 \left\| y-\tilde{y} \right\|^2 + \frac{4}{\gamma_2^2 } \left(\|(1-\lambda) [(u+x) - (\tilde{u}+\tilde{x})]+\lambda(x-\tilde{x})\|^2 + \left\| y-\tilde{y} \right\|^2 \right) \\
 \leq & \ 6 \left\| y-\tilde{y} \right\|^2 + \frac{4}{\gamma_2^2} \left((1-\lambda)^2 \|u-\tilde{u}\|^2+\|x-\tilde{x}\|^2 + \left\| y-\tilde{y} \right\|^2 \right) \\
= & \ \frac{4}{\gamma_2^2 }(1-\lambda)^2 \|u-\tilde{u}\|^2+\frac{4}{\gamma_2^2}\|x-\tilde{x}\|^2 + \left(6+\frac{4}{\gamma_2^2 }\right) \left\| y-\tilde{y} \right\|^2.
\end{align*}
Putting the estimates for $\|u-\tilde{u}\|$ and $\|v - \tilde{v}\|$ together we obtain
\begin{align*}
&\|\Gamma(x,y) - \Gamma(\tilde{x},\tilde{y})\|^2 = \|u - \tilde{u}\|^2 + \|v - \tilde{v}\|^2 \\
\leq & \ \left[6 +\frac{4}{\gamma_1^2} + \frac{4}{\gamma_2^2} + \frac{16(1-\lambda)^2}{\gamma_1^2 \gamma_2^2} +\frac{24(1-\lambda)^2}{\gamma_2^2}\right] \left\| x-\tilde{x} \right\|^2 +\left[6+\frac{4}{\gamma_1^2}+\frac{4}{\gamma_2^2} + \frac{16(1-\lambda)^2}{\gamma_1^2\gamma_2^2}\right] \left\|y-\tilde{y}\right\|^2  \\
\leq & \ \beta(\lambda, \gamma_1,\gamma_2)^2 \|(x,y) - (\tilde{x}, \tilde{y})\|^2,
\end{align*}
where
\begin{align*}
	\beta(\lambda,\gamma_1,\gamma_2) := \left(6 +\frac{4}{\gamma_1^2} + \frac{4+24(1-\lambda)^2}{\gamma_2^2} + \frac{16(1-\lambda)^2}{\gamma_1^2 \gamma_2^2} \right)^{\frac{1}{2}}.
\end{align*}
Therefore the operator $\Gamma$ is Lipschitz continuous and, according to the Picard-Lindel\"of Theorem, a unique continuously differentiable strong global solution $z=(x,y) : [0,+\infty) \rightarrow \R^n \times  \R^m$ exists. This means that the statements a), b), and d) in Definition \ref{strsol} are true.

This means that for a.e. $t \geq 0$ 
\begin{align}
-\gamma_1 L \dtx(t) -\nabla_xH(x(t), (1-\mu)(\dty(t)+y(t))+\mu y(t)) \in \pa f(\dtx(t)+x(t)), \label{paf} \\ 
-\gamma_2 L \dty(t) -\nabla_yH((1-\lambda)(\dtx(t)+x(t))+\lambda x(t), y(t)) \in \pa g(\dty(t)+y(t)), \label{pag}
\end{align}
thus $\dtx(t)+x(t) \in \dom f$ and  $\dty(t)+y(t) \in \dom g$. 

From Remark \ref{ac} ii) it follows that the first derivative $\dtz = (\dtx, \dty)$ is also locally absolutely continuous. This follows since the solution trajectory $z(t)=(x(t),y(t))$ is a $C^1$ function and therefore locally absolutely continuous and by definition of the dynamical system (\ref{dynpalmmu1}) 
it can be written as $(\dtx(t),\dty(t))= \Gamma \circ (x(t),y(t))$, where $\Gamma$ is $\beta(\lambda,\gamma_1,\gamma_2)$-Lipschitz continuous as we just showed. Moreover, the second derivatives of the partial trajectories exist for a.e. $t \in [0,+\infty)$ 
and the inequality $$\|(\ddot{x}(t),\ddot{y}(t))\| \leq \beta(\lambda,\gamma_1,\gamma_2) \|(\dtx(t),\dty(t))\|$$ holds for a.e. $t>0$. 
Let be $T >0$.  Then we have $\dot x + x, \ddot x + \dot x \in L^2([0,T], \R^n)$ and $ \dot y + y,  \ddot y + \dot y \in  L^2([0,T], \R^m)$. Using that $H$ is continuously differentiable, it yields that 
$-\gamma_1 L \dot x -\nabla_xH(x, (1-\mu)(\dot y+y)+\mu y) \in L^2([0,T], \R^n)$ and $-\gamma_2 L \dot y -\nabla_yH((1-\lambda)(\dot x + x)+ \lambda x, y) \in  L^2([0,T], \R^m)$. Thus, according to \cite[Lemma 3.3]{brez}, the functions $t \mapsto f(\dtx(t) + x(t))$ and $t \mapsto g(\dty(t) + y(t))$ are locally absolutely continuous. This concludes the proof.
\end{proof}

\section{Asymptotic analysis}\label{sec4}

\subsection{A preparatory result}
In order to prove convergence of trajectories of (\ref{dynpalm}), we follow the general approach that consists of several steps. In the first step, we will find a \textit{Lyapunov functional} with sufficient decrease of its time derivative, 
while in the second step we search for \textit{subgradient lower bounds} for the derivative of the trajectory. With standard arguments from \cite{haraux} one can show that the set of limit points of the trajectory is nonempty, compact and connected. 
Finally, global convergence of a trajectory of (\ref{dynpalm}) to a critical point will be established, by requiring that a regularization of the objective function fulfills the Kurdyka-\L ojasiewicz property.

We again have to require additional assumptions on the involved functions. Since the required additional assumptions on the involved functions  do not necessarily include, or are not as strong as the ones from the previous section, existence of a solution trajectory to (\ref{dynpalm}) itself will be presupposed. 
For performing asymptotic analysis it suffices to require less regularity of the solution to (\ref{dynpalm}) than we where able to obtain in Theorem \ref{existence} for the special system (\ref{dynpalmmu1}). More precisely, we just have to require the existence of a \textit{strong global solution} to the 
dynamical system as defined in Definition \ref{strsol}.

\begin{ass} \label{A4}
		 There exists a locally  absolutely continuous solution $z =(x,y)$ to the dynamical system (\ref{dynpalm}) with absolutely continuous derivative and an almost everywhere existing second derivative fulfilling the 
		 estimate $\|(\ddot{x}(t), \ddot{y}(t)\| \leq \beta(\lambda,\gamma_1,\gamma_2) \|(\dtx(t), \dty(t)\|$ for a.e. $t \in [0,+\infty)$ and some constant $\beta>0$, possibly depending on $\lambda$ and the stepsizes $\gamma_1,\gamma_2$.
\end{ass}

\begin{remark}
The statement of Assumption \ref{A4} is true for the case where $\mu=1$ and $f$ and $g$ are convex, as it was shown in the previous section. Nevertheless, as we will see as follows, the analysis of the asymptotic behavior of the trajectory can be carried out in a more general setting.
\end{remark}
 
The first ingredient of the asymptotic analysis will be a \textit{sufficient decrease} property of the derivative of the trajectory. For obtaining it, we need to control the time derivative of $f$ and $g$ along trajectories and therefore to assume that $f$ and $g$ admit a \emph{chain rule}.

\begin{Def}
Let $h: \R^d \to \overline{\R}$ be a proper and lower semicontinuous function. We say that $h$ admits the \emph{chain rule} if, for every locally absolutely continuous arc  $z: [0,+\infty) \to \R^d$ fulfilling $z(t) \in \dom \pa h$ for a.e. $t \geq 0$ and such that $t \mapsto h(z(t))$ is locally absolutely continuous, 
it holds for a.e. $t \geq 0$
\begin{align}\label{chain}
	\frac{\md}{\md t}h(z(t)) &= \langle \xi, \dtz(t) \rangle \ \mbox{for all} \ \xi \in \partial h(z(t)).
\end{align}
\end{Def}

\begin{remark}
\begin{itemize}
		\item If $h$ is, in addition, \textit{convex}, then it admits the chain rule (see \cite[Lemma 3.3]{brez}, \cite{att}).  Notice that if $z: [0,+\infty) \to \R^d$ is a locally absolutely continuous arc such that $z(t) \in \dom h$ and there exists $\xi(t) \in \partial h(z(t))$ for a.e. 
		$t \geq 0$, and for every $T >0$ it holds $z \in L^2([0,T], \R^d)$, $\dot z \in L^2([0,T], \R^d)$  and $\xi \in L^2([0,T], \R^d)$, then $t \mapsto h(z(t))$ is locally absolutely continuous.	

	\item A locally Lipschitz function $h : \R^d \rightarrow \R$ that is \textit{subdifferentially regular}
admits a chain rule (see \cite[Lemma 5.4]{dav}). We recall that a locally Lipschitz function $h: \R^d \to \R$ is called subdifferentially regular at a point $u \in \R^d$, if for all $\xi \in \pa h (u)$ the inequality
			$$h(v) \geq h(u) + \langle \xi, v -u \rangle + o(\|v-u\|)$$ 
			holds as $v \to u$.

Notice that in this case, if $z: [0,+\infty) \to \R^d$ is a locally absolutely continuous arc, then $t \mapsto h(z(t))$ is locally absolutely continuous, too, and the limiting subdifferential of $h$ is nothing else than \textit{Clarke's subdifferential} of $h$.

	\item A locally Lipschitz function $h : \R^d \rightarrow \R$, which is \emph{Whitney $C^1$-stratifiable}, admits a chain rule (see \cite[Theorem 5.8]{dav}). A function $h: \R^d \to \R$ is called Whitney $C^p$-stratifiable, if its graph admits a Whitney $C^p$-stratification. A Whitney $C^p$-stratification $\mathcal{A}$ of a set ${Q} \subseteq \R^d$ is a partition of ${Q}$ into finitely many nonempty \textit{$C^p$ smooth manifolds}, called \textit{strata}, satisfying the following two conditions:
		\begin{itemize}
			\item For any two strata $L, M \in \mathcal{A}$ it holds:
			$$L \cap \operatorname{cl}(M) \neq \emptyset \quad \Longrightarrow \quad L \subset \operatorname{cl}(M).$$
			\item For any sequence $(u_k)_{k \geq 0}$ in a stratum $M$ converging to a point $u$ in a stratum $L$, if the corresponding \emph{normal vectors} $v_k \in N_M(u_k)$ converge to a vector $v$, then $v \in N_L(u)$ holds. 
		\end{itemize}
A set $M \subseteq \R^d$ is a  \textit{$C^p$ smooth manifold} if there is an integer $r \in \N$ such that around any point $u \in M$, there is a neighborhood $U$ and a $C^p$-smooth map $F : U \rightarrow \R^{d-r}$ with $\nabla F(u)$ of full rank and satisfying $M \cap U = \{y \in U: F(y) =0\}$. Then the \textit{tangent space} $T_M(u)$ to $M$ at $u$ is the null space of $\nabla F(u)$, and the \textit{normal space} $N_M(u)$ to $M$ at $u$ is the orthogonal space to $T_M(u)$.

Functions having as graphs semialgebraic, subanalytic or even sets that are definable in an o-minimal structure are Withney $C^\infty$-stratifiable.
\end{itemize}
\end{remark}

\begin{theorem}\label{theorem1}
Let $f$, $g$ and $H$ fulfill Assumption \ref{A1} and $f$, $g$ admit a chain rule. Let further $z=(x,y) : [0,+\infty) \rightarrow \R^n \times \R^m$ be a strong global solution of the dynamical system (\ref{dynpalm}) fulfilling Assumption \ref{A4}, where the constants are chosen such that
	\begin{align}\label{g1} 
		\min\{\gamma_1, \gamma_2\} &> \beta(\lambda,\gamma_1,\gamma_2) \max{\bigg\{(1+\lambda +\mu + \lambda^2),(1+\lambda+\mu +\mu^2)\bigg\}} + \frac{\lambda+\mu}{2}. 
	\end{align}
Then the following statements are true:
	\begin{enumerate}
		\item $\dtz \in L^2([0,+\infty);\R^n \times \R^m])$ and $\lim_{t \to +\infty} \dtz(t) = 0$;
		\item $\exists \lim_{t \to +\infty} \Psi(\dtz(t)+z(t)) \in \R$.
	\end{enumerate}
\end{theorem}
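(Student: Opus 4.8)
The plan is to construct a Lyapunov-type estimate by exploiting the subdifferential inclusions \eqref{paf}--\eqref{pag} together with the chain rule for $f$ and $g$. First I would differentiate the map $t \mapsto \Psi(\dtz(t)+z(t)) = f(\dtx(t)+x(t)) + g(\dty(t)+y(t)) + H(\dtx(t)+x(t), \dty(t)+y(t))$. Applying the chain rule to $f$ with the arc $t \mapsto \dtx(t)+x(t)$ (which lies in $\dom\pa f$ a.e. by \eqref{paf}, and along which $f$ is locally absolutely continuous by Assumption \ref{A4}/Definition \ref{strsol} c)) and picking the particular subgradient $\xi = -\gamma_1 L\dtx(t) - \nabla_x H(x(t),(1-\mu)(\dty(t)+y(t))+\mu y(t)) \in \pa f(\dtx(t)+x(t))$, I get
$$\frac{\md}{\md t} f(\dtx(t)+x(t)) = \left\langle -\gamma_1 L\dtx(t) - \nabla_x H(x(t),\ldots), \ddot x(t)+\dtx(t)\right\rangle,$$
and symmetrically for $g$. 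For the smooth term $H$, the usual chain rule gives $\frac{\md}{\md t}H(\dtx+x,\dty+y) = \langle \nabla_x H(\dtx+x,\dty+y), \ddot x + \dtx\rangle + \langle \nabla_y H(\dtx+x,\dty+y),\ddot y + \dty\rangle$.

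The key step is then to add these three contributions and show the sum is bounded above by $-c\,\|\dtz(t)\|^2$ for some $c>0$, up to an $L^1$ remainder. The $-\gamma_1 L\|\dtx\|^2 - \gamma_2 L\|\dty\|^2$-type terms arising from the $\langle -\gamma_i L\dtz, \ddot z + \dtz\rangle$ pieces (note $\langle \dtx, \ddot x\rangle = \frac12\frac{\md}{\md t}\|\dtx\|^2$ integrates to a boundary term, and $\langle -\gamma_1 L\dtx, \dtx\rangle = -\gamma_1 L\|\dtx\|^2$) are the source of decrease. The cross terms, built from differences like $\nabla_x H(\dtx+x,\dty+y) - \nabla_x H(x,(1-\mu)(\dty+y)+\mu y)$, are controlled by the $L$-Lipschitz continuity of $\nabla H$: the argument difference in the first slot is $\dtx$ and in the second slot is $(1-\mu)\dty$, so the gradient difference is bounded by $L(\|\dtx\| + (1-\mu)\|\dty\|)$. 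Pairing this against $\ddot x + \dtx$ and using the second-order estimate $\|\ddot z(t)\| \le \beta\|\dtz(t)\|$ from Assumption \ref{A4}, everything reduces to a quadratic form in $\|\dtx(t)\|$ and $\|\dty(t)\|$ with coefficients involving $\beta$, $\lambda$, $\mu$, $\gamma_1$, $\gamma_2$, plus a total-derivative term $\frac12\frac{\md}{\md t}(\gamma_1 L\|\dtx\|^2 + \gamma_2 L\|\dty\|^2)$. Condition \eqref{g1} is precisely what makes the resulting quadratic form negative definite, i.e. there exists $c>0$ with
$$\frac{\md}{\md t}\Big[\Psi(\dtz(t)+z(t)) + \tfrac12(\gamma_1 L\|\dtx(t)\|^2 + \gamma_2 L\|\dty(t)\|^2)\Big] \le -c\,\|\dtz(t)\|^2 \quad \text{for a.e. } t\ge 0.$$

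From here the conclusions follow with the two Grönwall-type lemmas. Define $\mathcal{E}(t) := \Psi(\dtz(t)+z(t)) + \tfrac12(\gamma_1 L\|\dtx(t)\|^2 + \gamma_2 L\|\dty(t)\|^2)$. Since $\Psi$ is bounded below (Assumption \ref{A1}) and the squared-norm terms are nonnegative, $\mathcal{E}$ is bounded below; it is locally absolutely continuous by Definition \ref{strsol} c) and local absolute continuity of $t\mapsto\|\dtz(t)\|^2$. The inequality above shows $\frac{\md}{\md t}\mathcal{E}(t) \le 0$, so by Lemma \ref{fejer1} (with $G\equiv 0$) the limit $\lim_{t\to+\infty}\mathcal{E}(t)$ exists in $\R$. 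Integrating the inequality over $[0,+\infty)$ gives $c\int_0^{+\infty}\|\dtz(t)\|^2\,\md t \le \mathcal{E}(0) - \lim_{t\to\infty}\mathcal{E}(t) < +\infty$, hence $\dtz \in L^2$, proving the first half of statement 1. For $\lim_{t\to+\infty}\dtz(t)=0$, I would apply Lemma \ref{fejer2} to $F(t) := \|\dtz(t)\|^2$: it is in $L^1$ by what we just showed, and $\frac{\md}{\md t}\|\dtz(t)\|^2 = 2\langle \dtz(t),\ddot z(t)\rangle \le 2\beta\|\dtz(t)\|^2 =: G(t) \in L^1$ by Assumption \ref{A4}, so $F(t)\to 0$. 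Finally, since $\|\dtz(t)\|^2 \to 0$, the squared-norm correction in $\mathcal{E}(t)$ vanishes in the limit, and therefore $\lim_{t\to+\infty}\Psi(\dtz(t)+z(t)) = \lim_{t\to+\infty}\mathcal{E}(t) \in \R$, which is statement 2.

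The main obstacle I anticipate is the bookkeeping in deriving the negative-definite quadratic form: one must carefully track which gradient of $H$ is evaluated at $(\dtx+x,\dty+y)$ versus at the mixed arguments appearing in \eqref{dynpalm}, expand the cross terms via Cauchy--Schwarz and Young's inequality with the right weights, substitute $\|\ddot z\| \le \beta\|\dtz\|$, and verify that the off-diagonal coupling between $\|\dtx\|^2$ and $\|\dty\|^2$ is dominated so that \eqref{g1} indeed yields strict negativity — the asymmetry in $\lambda$ versus $\mu$ (only $x$'s update feeds into $\nabla_y H$ with the $\lambda$-relaxation, and only $y$'s into $\nabla_x H$ with the $\mu$-relaxation) is what produces the asymmetric $\max\{\cdot,\cdot\}$ structure in the hypothesis, and getting those two bounds to line up correctly is the delicate part.
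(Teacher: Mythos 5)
Your plan is essentially the paper's own proof: the same chain-rule computation of $\frac{\md}{\md t}\Psi(\dtz(t)+z(t))$ using the subgradients from \eqref{paf}--\eqref{pag}, the same Lyapunov energy $\Psi(\dtz+z)+\tfrac12\big(\gamma_1 L\|\dtx\|^2+\gamma_2 L\|\dty\|^2\big)$ leading to the decrease inequality \eqref{hdec} under \eqref{g1}, and the same use of Lemma \ref{fejer1} and Lemma \ref{fejer2} (your choice $G=2\beta\|\dtz\|^2$ is a harmless variant of the paper's $G=\|\dtx\|^2+\|\ddot x\|^2+\|\dty\|^2+\|\ddot y\|^2$). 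One small correction to your bookkeeping: the second-slot argument difference in the $x$-inclusion is $\mu\,\dty(t)$ (and the first-slot difference in the $y$-inclusion is $\lambda\,\dtx(t)$), not $(1-\mu)\dty(t)$, which is exactly why $\mu$ and $\lambda$ --- rather than $1-\mu$, $1-\lambda$ --- appear in the cross terms and in condition \eqref{g1}.
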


\begin{proof}
Using the characterization (\ref{prox1}) of the proximal map in the system (\ref{dynpalm}), we obtain that \eqref{paf}-\eqref{pag} hold for a.e. $t \geq 0$. Since $f$ and $g$ admit a chain rule (\ref{chain}), it holds
\begin{align*}
&\frac{\md}{\md t} f(\dtx(t)+x(t))=- \Big\langle \ddot{x}(t)+\dtx(t), \, \gamma_1 L \dtx(t) +  \nabla_xH(x(t), (1-\mu)(\dty(t)+y(t))+\mu y(t)) \Big\rangle \\
&\frac{\md}{\md t} g(\dty(t)+y(t))=- \Big\langle \ddot{y}(t)+\dty(t), \, \gamma_2 L \dty(t) + \nabla_yH((1-\lambda)(\dtx(t)+x(t))+\lambda x(t), y(t)) \Big\rangle
\end{align*}
for a.e. $t \geq 0$.
Using the identity
\begin{align*}
\ &\frac{\md}{\md t}H(\dtx(t)+x(t),\dty(t) +y(t)) \\
= \ &\Big\langle \nabla_x H(\dtx(t)+x(t),\, \dty(t) +y(t)), \ddot{x}(t) + \dtx(t) \Big\rangle + \Big\langle \nabla_y H(\dtx(t)+x(t),\, \dty(t) +y(t)), \ddot{y}(t) + \dty(t) \Big\rangle, 
\end{align*}
we can compute
\begin{align*}
\ &\frac{\md}{\md t} \Psi(\dtx(t) + x(t), \dty(t)+y(t)) \\
= \ &  \frac{\md}{\md t}H(\dtx(t)+x(t),\dty(t) +y(t)) +  \frac{\md}{\md t} f(\dtx(t)+x(t)) + \frac{\md}{\md t} g(\dty(t)+y(t)) \\
= \ & \Big\langle \ddot{x}(t) + \dtx(t), \, -\gamma_1 L \dtx(t) - \nabla_xH(x(t), (1-\mu)(\dty(t)+y(t))+\mu y(t)) + \nabla_xH(\dtx(t)+ x(t), \dty(t)+y(t)) \Big\rangle \\
+ \ & \Big\langle \ddot{y}(t) + \dty(t), \, -\gamma_2 L \dty(t) - \nabla_yH((1-\lambda)(\dtx(t)+x(t))+\lambda x(t), y(t)) + \nabla_yH(\dtx(t)+ x(t), \dty(t)+y(t))\Big\rangle
\end{align*}
for a.e. $t \geq 0$. After first factorizing out the components of the inner product and then using the Cauchy-Schwarz inequality, we obtain
\begin{align*}
& \frac{\md}{\md t} \Psi(\dtx(t) + x(t), \dty(t)+y(t)) \notag\\
= & -\gamma_1 L \|\dtx(t)\|^2 -\gamma_2 L \|\dty(t)\|^2 -\gamma_1 L \Big\langle \ddot{x}(t), \, \dtx(t) \Big\rangle - \gamma_2 L \Big\langle \ddot{y}(t), \, \dty(t) \Big\rangle \notag \\
&+ \Big\langle \ddot{x}(t) + \dtx(t),\,  \nabla_xH(\dtx(t)+ x(t), \dty(t)+y(t)) - \nabla_xH(x(t), \, (1-\mu)(\dty(t)+y(t))+\mu y(t)) \Big\rangle \notag \\
& +  \Big\langle \ddot{y}(t) + \dty(t), \, \nabla_yH(\dtx(t)+ x(t), \dty(t)+y(t))- \nabla_yH((1-\lambda)(\dtx(t)+ x(t)) + \lambda x(t), \, y(t)) \Big\rangle \notag \\
\leq &  -\gamma_1 L \|\dtx(t)\|^2 -\gamma_2 L \|\dty(t)\|^2 - \frac{\gamma_1 L}{2} \frac{\md}{\md t} \|\dtx(t)\|^2 - \frac{\gamma_2 L}{2} \frac{\md}{\md t} \|\dty(t)\|^2 \\
&+ \|\ddot{x}(t) + \dtx(t)\| \| \nabla_xH(\dtx(t)+ x(t), \dty(t)+y(t)) - \nabla_xH(x(t), \, (1-\mu)(\dty(t)+y(t))+\mu y(t))\| \notag \\
&+ \| \ddot{y}(t) + \dty(t)\| \|  \nabla_yH(\dtx(t)+ x(t), \dty(t)+y(t))- \nabla_yH((1-\lambda)(\dtx(t)+ x(t)) + \lambda x(t), \, y(t)) \| \notag 
\end{align*}
for a.e. $t \geq 0$. In the next step we apply the Lipschitz continuity of the gradient of $H$, which yields
\begin{align*}
 & \frac{\md}{\md t} \Psi(\dtx(t) + x(t), \dty(t)+y(t))\notag\\
\leq & -\gamma_1 L \|\dtx(t)\|^2 -\gamma_2 L \|\dty(t)\|^2 - \frac{\gamma_1 L}{2} \frac{\md}{\md t} \|\dtx(t)\|^2 - \frac{\gamma_2 L}{2} \frac{\md}{\md t} \|\dty(t)\|^2 \notag \\
&+L (\|\dtx(t)\| + \mu \|\dty(t)\|) \|\ddot{x}(t) + \dtx(t)\| +L (\lambda \|\dtx(t)\| + \|\dty(t)\|) \|\ddot{y}(t) + \dty(t)\| \\
\leq & -\gamma_1 L \|\dtx(t)\|^2 -\gamma_2 L \|\dty(t)\|^2 - \frac{\gamma_1 L}{2} \frac{\md}{\md t} \|\dtx(t)\|^2 - \frac{\gamma_2 L}{2} \frac{\md}{\md t} \|\dty(t)\|^2 \notag\\
& +L (\|\dtx(t)\| + \mu \|\dty(t)\|) \|\ddot{x}(t)\| + L (\lambda \|\dtx(t)\| + \|\dty(t)\|) \|\ddot{y}(t)\| \notag \\
&+ L(\|\dtx(t)\| + \mu \|\dty(t)\|) \|\dtx(t)\| + L (\lambda \|\dtx(t)\| + \|\dty(t)\|) \|\dty(t)\|\notag
\end{align*}
for a.e. $t \geq 0$. Further, we have
\begin{align}\label{est3}
 &L (\|\dtx(t)\| + \mu \|\dty(t)\|) \|\ddot{x}(t)\| + L (\lambda \|\dtx(t)\| + \|\dty(t)\|) \|\ddot{y}(t)\| \notag \\
\leq \ & L \sqrt{\|\ddot{x}(t)\|^2+\|\ddot{y}(t)\|^2} \sqrt{(\|\dtx(t)\|+\mu\|\dty(t)\|)^2+(\lambda\|\dtx(t)\|+\|\dty(t)\|)^2} \\
= \ & L \|(\ddot{x}(t),\ddot{y}(t))\| \sqrt{(1+\lambda^2)\|\dot{x}(t)\|^2 + (1+\mu^2)\|\dot{y}(t)\|^2 + 2(\lambda+\mu)\|\dot{x}(t)\|\|\dot{y}(t)\|} \notag \\
\leq \ & L \beta(\lambda,\gamma_1,\gamma_2) \|(\dtx(t), \dty(t))\| \sqrt{(1+\lambda +\mu + \lambda^2)\|\dtx(t)\|^2+(1+\lambda+\mu +\mu^2)\|\dty(t)\|^2}, \notag
\end{align}
for a.e. $t \geq 0$, where in the last inequality we used Assumption \ref{A4}. By defining 
\begin{align*}
M_{\lambda,\mu}:=\max{\bigg\{(1+\lambda +\mu + \lambda^2)^\frac{1}{2},(1+\lambda+\mu +\mu^2)^\frac{1}{2}\bigg\}},
\end{align*}
we can write
\begin{align*}
L \Big(\|\dtx(t)\| + \mu \|\dty(t)\|\Big) \|\ddot{x}(t)\| + L \Big(\lambda \|\dtx(t)\| + \|\dty(t)\|\Big) \|\ddot{y}(t)\|  \leq L \beta(\lambda,\gamma_1,\gamma_2) M_{\lambda,\mu}\Big(\|\dtx(t)\|^2 + \|\dty(t)\|^2\Big)
\end{align*}
for a.e. $t \geq 0$. On the other hand, it holds
\begin{align*}
& L\Big(\|\dtx(t)\| + \mu \|\dty(t)\|\Big) \|\dtx(t)\| + L \Big(\lambda \|\dtx(t)\| + \|\dty(t)\|\Big) \|\dty(t)\| \\
= \ & L\Big(\|\dtx(t)\|^2+\|\dty(t)\|^2\Big) + L(\lambda+\mu)\|\dtx(t)\|\|\dty(t)\| \\
\leq \ & L\left(1+\frac{\lambda+\mu}{2}\right) \Big(\|\dtx(t)\|^2+\|\dty(t)\|^2\big)
\end{align*}
for a.e. $t \geq 0$. Combining the estimates above we obtain for a.e.  $t \geq 0$ the following inequality
\begin{align*}
 & \frac{\md}{\md t} \Psi(\dtx(t) + x(t), \dty(t)+y(t))\\ 
\leq & - \frac{\gamma_1 L}{2} \frac{\md}{\md t} \|\dtx(t)\|^2 - \frac{\gamma_2 L}{2} \frac{\md}{\md t} \|\dty(t)\|^2 \\
 &-L \left(\gamma_1 - \beta(\lambda,\gamma_1,\gamma_2) M_{\lambda,\mu} - \frac{\lambda+\mu}{2}\right) \|\dtx(t)\|^2 - L \left(\gamma_2-\beta(\lambda,\gamma_1,\gamma_2) M_{\lambda,\mu} -\frac{\lambda+\mu}{2} \right) \|\dty(t)\|^2
\end{align*}
or, equivalently,
\begin{align}\label{hdec}
& \frac{\md}{\md t} \left( \Psi(\dtx(t) + x(t), \dty(t)+y(t)) + \frac{1}{2}\left\|\left(\sqrt{\gamma_1 L} \dtx(t), \,\sqrt{\gamma_2 L} \dty(t)\right)\right\|^2 \right) \notag \\
\leq & -L \left(\gamma_1 - \beta(\lambda,\gamma_1,\gamma_2) M_{\lambda,\mu} - \frac{\lambda+\mu}{2}\right) \|\dtx(t)\|^2 - L \left(\gamma_2-\beta(\lambda,\gamma_1,\gamma_2) M_{\lambda,\mu} -\frac{\lambda+\mu}{2} \right) \|\dty(t)\|^2 \leq 0,
\end{align}
where we used that the stepsizes $\gamma_1$ and $\gamma_2$ fulfill  (\ref{g1}). We will use the following notation for the positive constants in the above inequality
\begin{align*}
	m_1:=L \left(\gamma_1 - \beta(\lambda,\gamma_1,\gamma_2) M_{\lambda,\mu} - \frac{\lambda+\mu}{2}\right)  \ \mbox{and} \
	m_2:=L\left(\gamma_2-\beta(\lambda,\gamma_1,\gamma_2) M_{\lambda,\mu} -\frac{\lambda+\mu}{2} \right).
\end{align*}
The decreasing property \eqref{hdec} serves as inspiration for the definition of the Lyapunov functional,  which will be used in the next subsection to show convergence of the trajectory. 
Integrating (\ref{hdec}) from 0 to $T$, with $T>0$ fixed, yields
\begin{align}\label{intdec}
&\Psi(\dtx(T) + x(T), \dty(T)+y(T)) + \frac{1}{2} \left\|\left(\sqrt{\gamma_1 L} \dtx(T), \,\sqrt{\gamma_2 L} \dty(T)\right)\right\|^2 + \int_0^T \left[m_1\|\dtx(t)\|^2 + m_2\|\dty(t)\|^2 \right] \; \md t \notag\\
\leq \ & \Psi(\dtx(0) + x_0, \dty(0)+y_0) + \frac{1}{2} \left\|\left(\sqrt{\gamma_1 L} \dtx(0), \,\sqrt{\gamma_2 L} \dty(0)\right)\right\|^2. 
\end{align}
By letting $T$ converge to $+\infty$, we can easily deduce that
$\dtx \in L^2([0,+\infty); \R^{n})$, $\dty \in L^2([0,+\infty); \R^{m})$, and  therefore 
\begin{align*}
\dtz = (\dtx,\dty)\in  L^2([0,+\infty);\R^n \times \R^m]),
\end{align*}
as well as 
\begin{align}\label{dec}
\frac{\md}{\md t} &\left( \Psi(\dtx(t) + x(t), \dty(t)+y(t)) + \frac{1}{2}\left\|\left(\sqrt{\gamma_1 L} \dtx(t), \,\sqrt{\gamma_2 L} \dty(t)\right)\right\|^2 \right) \leq 0 \ \mbox{for a.e.} \ t \geq 0.
\end{align}
Due to Assumption \ref{A4}, we also obtain $\ddot{z} \in L^2([0,+\infty);\R^n \times \R^m])$. Since for a.e. $t \in [0,\infty)$
\begin{align*}
\frac{\md}{\md t} \|\dtz(t)\|^2 &= 2 (\langle \dtx(t),\, \ddot{x}(t) \rangle + \langle \dty(t),\, \ddot{y}(t) \rangle) \leq \|\dtx(t)\|^2 + \|\ddot{x}(t)\|^2 + \|\dty(t)\|^2 + \|\ddot{y}\|^2,
\end{align*}
it follows from Lemma \ref{fejer2} that
\begin{align*}
\lim_{t \to +\infty} \dtz(t) = 0 \  \text{and therefore also} \
\lim_{t \to +\infty} \dtx(t) = 0 \text{ and } \lim_{t \to +\infty} \dty(t) = 0. 
\end{align*}
From (\ref{dec}) and Lemma \ref{fejer1} we can also conclude that
\begin{align*}
\exists \lim_{t \to +\infty} \Psi(\dtx(t)+x(t), \dty(t) + y(t)) \in \R.
\end{align*}
\end{proof}

\subsection{A Lyapunov functional}

The major aim of this subsection is to prove that the limit set of the solution trajectory is a subset of the set of critical points of the objective function $\Psi$. We recall that the \textit{limit set} of a trajectory $z: [0,+\infty) \to \R^m \times \R^n$ of the dynamical system \eqref{dynpalm} is defined as
$$\omega(z) := \{\bar{z} \in \R^m \times \R^n \; : \; \exists t_k \to +\infty \text{ such that } z(t_k) \to \bar{z} \text{ as } k \to \infty \}.$$ 
The main tool in our analysis is the following \textit{Lyapunov functional} $\mathcal{H}: \R^m \times \R^n \times \R^m \times \R^n \mapsto \bar{\R}$,
\begin{align}\label{ly}
\h\big[(x,y),(u,v)\big] := \Psi(x,y)+\frac{1}{2} \left\|\left(\sqrt{\gamma_1 L}(x-u), \sqrt{\gamma_2 L}(y-v)\right)\right\|^2, 
\end{align}
the definition of which is obviously inspired by (\ref{dec}). Note that
\begin{align*}
\pa\h\big[(x,y),(u,v)\big] = \Big(\pa\Psi(x,y) + L\left(\gamma_1 (x-u), \gamma_2 (y-v)\right)\Big) \times \Big\{-L\Big(\gamma_1 (x-u), \gamma_2 (y-v)\Big)\Big\}
\end{align*}
for every $(x,y), (u,v) \in \R^m \times \R^n$, thus
\begin{align*}
	& \pa\h \big[(\dtx(t)+x(t),\dty(t)+y(t)),(x(t),y(t))\big]  \notag \\
	 = \  &\Big(\pa\Psi((\dtx(t)+x(t),\dty(t)+y(t)) + L\left(\gamma_1 \dtx(t), \gamma_2 \dty(t)\right)\Big) \times \Big\{-L \Big(\gamma_1\dtx(t), \gamma_2 \dty(t)\Big)\Big\} \ \mbox{for a.e.} \ t \geq 0.
\end{align*}
Instead of investigating the trajectory $z$ itself, we go over to $(\dtz+z,z)$. We can prove the following properties of the Lyapunov functional $\h$.
\begin{theorem}\label{hthm}
	Let $f$, $g$ and $H$ fulfill Assumption \ref{A1} and $f$, $g$ admit a chain rule. Let further $z=(x,y) : [0,+\infty) \rightarrow \R^n \times \R^m$ be a strong global solution of the dynamical system (\ref{dynpalm}) fulfilling Assumption \ref{A4}, where the constants are chosen such that \eqref{g1} holds. 
	Then the following statements are true:
	\begin{description}
		\item[(i) decrease of $\h$:] for a.e. $t \geq 0$ it holds
		\begin{align}\label{dech}
			&\frac{\md}{\md t} \h\bigg[\big(\dtx(t)+x(t), \dty(t)+y(t)\big), \big(x(t), y(t)\big)\bigg] \leq -\min{\{m_1,m_2\}} \|(\dtx(t),\dty(t))\|^2 \leq 0,
		\end{align}
		as well as
		\begin{align}\label{limh}
			\exists \lim_{t \to \infty} \h\big[(\dtx(t)+x(t), \dty(t)+y(t)), (x(t),y(t))\big] \in \R.
		\end{align}
		\item[(ii) subgradient lower bound:] for a.e. $t \geq 0$ it holds
		\begin{align}\label{subgrad}
		&\left( \begin{pmatrix} \nabla_x H(\dtx(t)+ x(t), \dty(t)+ y(t))- \nabla_x H(x(t), (1-\mu)(\dty(t)+y(t))+\mu y(t)) \\  \nabla_y H(\dtx(t)+ x(t), \dty(t)+ y(t))-\nabla_y H((1-\lambda)(\dtx(t)+x(t))+\lambda x(t), y(t))\end{pmatrix}, \, 
		- L \begin{pmatrix} \gamma_1 \dtx(t)\\ \gamma_2 \dty(t) \end{pmatrix} 
		\right) \notag \\
			\in \ & \pa\h\big[(\dtx(t)+x(t), \dty(t)+y(t)), (x(t)+y(t))\big]
		\end{align}
		and 
		\begin{align}\label{subgradest}
			&\bigg\|\bigg( \begin{pmatrix} \nabla_x H(\dtx(t)+ x(t), \dty(t)+ y(t))- \nabla_x H(x(t), (1-\mu)(\dty(t)+y(t))+\mu y(t)) \\  \nabla_y H(\dtx(t)+ x(t), \dty(t)+ y(t))-\nabla_y H((1-\lambda)(\dtx(t)+x(t))+\lambda x(t), y(t))\end{pmatrix}, \, 
			-L \begin{pmatrix} \gamma_1 \dtx(t)\\ \gamma_2 \dty(t) \end{pmatrix} 
			\bigg)\bigg\| \notag \\
			& \leq L\sqrt{\max{\left\{(1+\gamma_1^2 +\lambda^2), (1 + \gamma_2^2 +\mu^2)\right\}}} \|(\dtx(t), \dty(t))\|.
		\end{align}
		\item[(iii) convergence to a critical point:] it
		\begin{align}\label{subset}
		\omega(z) \subset \crit( \Psi) 
		\end{align}
		and for every $\bar z = (\bar{x},\bar{y}) \in \omega(z)$  and $t_k \to +\infty$ such that $z(t_k) = (x(t_k), y(t_k)) \to \bar{z}$  as $k \to +\infty$, we have 
		\begin{align}\label{convcrit}
			\h\big[(\dtx(t_k)+x(t_k),\dty(t_k)+y(t_k)),(x(t_k),y(t_k))\big] \to \h\big[(\bar{x}, \bar{y}),(\bar{x}, \bar{y})\big] \text{ as } k \to +\infty.
		\end{align} 
	\end{description}
\end{theorem}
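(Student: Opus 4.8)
The plan is to prove the three parts essentially in order, reusing the computation already carried out in the proof of Theorem \ref{theorem1}. For part (i), observe that by definition \eqref{ly}
$$\h\big[(\dtx(t)+x(t),\dty(t)+y(t)),(x(t),y(t))\big] = \Psi(\dtx(t)+x(t),\dty(t)+y(t)) + \tfrac12\big\|(\sqrt{\gamma_1 L}\,\dtx(t),\sqrt{\gamma_2 L}\,\dty(t))\big\|^2,$$
so \eqref{dech} is exactly inequality \eqref{hdec} rewritten with the constants $m_1,m_2$, and $-\min\{m_1,m_2\}\|(\dtx,\dty)\|^2$ is an upper bound for $-m_1\|\dtx\|^2-m_2\|\dty\|^2$. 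The existence of the limit \eqref{limh} then follows from Lemma \ref{fejer1}: the function $t\mapsto\h[\cdots]$ is locally absolutely continuous (it is the sum of $\Psi(\dtz+z)$, which is locally absolutely continuous by Assumption \ref{A4} together with property c) of Definition \ref{strsol} and the chain rule, and of a locally absolutely continuous function of $\dtz$), it is bounded from below because $\Psi$ is bounded from below under Assumption \ref{A1} and the norm term is nonnegative, and its derivative is $\le 0$, hence $\le G(t):=0\in L^1$.

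For part (ii), I would start from the subdifferential formula for $\h$ stated just before the theorem, namely
$$\pa\h[(x,y),(u,v)] = \big(\pa\Psi(x,y) + L(\gamma_1(x-u),\gamma_2(y-v))\big)\times\{-L(\gamma_1(x-u),\gamma_2(y-v))\},$$
which itself comes from Remark on the sum rule $\pa(h+k)=\pa h+\nabla k$ applied to the smooth quadratic, and from the block-separable structure giving $\pa\Psi(x,y)=(\pa f(x)+\nabla_x H(x,y))\times(\pa g(y)+\nabla_y H(x,y))$. Evaluating at $(u,v)=(x(t),y(t))$ and $(x,y)=(\dtx(t)+x(t),\dty(t)+y(t))$, the second block is $-L(\gamma_1\dtx(t),\gamma_2\dty(t))$ as claimed. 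For the first block I would use \eqref{paf}--\eqref{pag}: these give $-\gamma_1 L\dtx(t)-\nabla_x H(x(t),(1-\mu)(\dty(t)+y(t))+\mu y(t))\in\pa f(\dtx(t)+x(t))$, so adding $\nabla_x H(\dtx(t)+x(t),\dty(t)+y(t))$ puts $\nabla_x H(\dtx(t)+x(t),\dty(t)+y(t))-\nabla_x H(x(t),(1-\mu)(\dty(t)+y(t))+\mu y(t))-\gamma_1 L\dtx(t)$ into $\pa f(\dtx(t)+x(t))+\nabla_x H(\dtx(t)+x(t),\dty(t)+y(t))$; then adding the term $L\gamma_1\dtx(t)$ coming from $L(\gamma_1(x-u),\cdots)$ produces precisely the $x$-component displayed in \eqref{subgrad}, and symmetrically for $y$. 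This proves \eqref{subgrad}. For the norm estimate \eqref{subgradest} I would bound the first block using $L$-Lipschitz continuity of $\nabla H$: $\|\nabla_x H(\dtx+x,\dty+y)-\nabla_x H(x,(1-\mu)(\dty+y)+\mu y)\|\le L\|(\dtx,(1-\mu)\dty)\|\le L\sqrt{\|\dtx\|^2+\dty^2}$ — more carefully $\le L(\|\dtx\|^2+(1-\mu)^2\|\dty\|^2)^{1/2}$, though the stated bound uses the cruder $(\|\dtx\|^2+\|\dty\|^2)$-type majorant after combining — and similarly $\|\nabla_y H(\dtx+x,\dty+y)-\nabla_y H((1-\lambda)(\dtx+x)+\lambda x,y)\|\le L(\lambda^2\|\dtx\|^2+\|\dty\|^2)^{1/2}$; together with the second-block contribution $L(\gamma_1^2\|\dtx\|^2+\gamma_2^2\|\dty\|^2)^{1/2}$, collecting coefficients of $\|\dtx\|^2$ and $\|\dty\|^2$ and pulling out the maximum yields $L\sqrt{\max\{1+\gamma_1^2+\lambda^2,\,1+\gamma_2^2+\mu^2\}}\,\|(\dtx,\dty)\|$.

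For part (iii), I would argue in the now-standard KL-style manner. Take $\bar z=(\bar x,\bar y)\in\omega(z)$ with $z(t_k)\to\bar z$. Since $\dtz(t)\to 0$ by Theorem \ref{theorem1}, also $\dtz(t_k)+z(t_k)\to\bar z$. Lower semicontinuity of $\Psi$ gives $\liminf_k\Psi(\dtz(t_k)+z(t_k))\ge\Psi(\bar z)$; for the reverse inequality I would use the dynamical system: evaluating $\Psi=f+g+H$ at $\dtz(t_k)+z(t_k)$, the proximal characterization $\prox$ gives, for each $k$, that $\dtx(t_k)+x(t_k)$ minimizes $f(\cdot)+\frac{\gamma_1 L}{2}\|\cdot-(x(t_k)-\frac{1}{\gamma_1 L}\nabla_x H(\cdots))\|^2$, so comparing its value with that at $\bar x$ and passing to the limit (using $\dtz\to0$, continuity of $\nabla H$, and lsc of $f$, $g$) yields $\limsup_k f(\dtx(t_k)+x(t_k))\le f(\bar x)$ and similarly for $g$; combined with continuity of $H$ this gives $\limsup_k\Psi(\dtz(t_k)+z(t_k))\le\Psi(\bar z)$, hence $\Psi(\dtz(t_k)+z(t_k))\to\Psi(\bar z)$. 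Feeding $\dtz(t_k)\to0$ into the norm term of $\h$ then gives \eqref{convcrit}: $\h[(\dtx(t_k)+x(t_k),\dty(t_k)+y(t_k)),(x(t_k),y(t_k))]\to\Psi(\bar z)+0=\h[(\bar x,\bar y),(\bar x,\bar y)]$. Finally, to get \eqref{subset}, pass to the limit in \eqref{subgrad} along $t_k$: the element of $\pa\h$ there has norm $\le L\sqrt{\cdots}\,\|\dtz(t_k)\|\to 0$, and since $\h$ is lsc and $\h[\cdots](t_k)\to\h[(\bar x,\bar y),(\bar x,\bar y)]$, the closedness of the limiting subdifferential graph gives $0\in\pa\h[(\bar x,\bar y),(\bar x,\bar y)]=(\pa\Psi(\bar x,\bar y)+0)\times\{0\}$, i.e. $0\in\pa\Psi(\bar z)$, so $\bar z\in\crit(\Psi)$.

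The main obstacle I expect is part (iii), specifically the $\limsup$ estimate $\limsup_k\Psi(\dtz(t_k)+z(t_k))\le\Psi(\bar z)$: one must handle the \emph{coupled} arguments inside the two proximal operators (the $y$-update uses $\nabla_y H$ evaluated at $(1-\lambda)(\dtx(t_k)+x(t_k))+\lambda x(t_k)$, not simply at $x(t_k)$), so the comparison inequalities for $f$ and $g$ must be set up carefully and the limits taken simultaneously, relying crucially on $\dtz(t_k)\to0$ to make all the perturbed evaluation points converge to $\bar z$. The other parts are direct consequences of computations already present in the proof of Theorem \ref{theorem1} and of the subdifferential formula stated before the theorem.
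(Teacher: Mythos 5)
Your proposal follows the paper's proof essentially step for step: (i) is inequality \eqref{hdec} plus the monotonicity/boundedness argument, (ii) is obtained exactly as in the paper by adding $\nabla_x H(\dtx(t)+x(t),\dty(t)+y(t))$ and $\nabla_y H(\dtx(t)+x(t),\dty(t)+y(t))$ to \eqref{paf}--\eqref{pag} and invoking the subdifferential formula for $\h$, and (iii) uses lower semicontinuity for the liminf, the prox-minimality comparison at $\bar x$, $\bar y$ for the limsup, and closedness of the limiting subdifferential graph (the paper closes the partial inclusions \eqref{useful}--\eqref{useful2} instead of the full $\pa\h$ inclusion, which is an equivalent cosmetic difference). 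One small slip to fix: in the estimate for \eqref{subgradest}, the second arguments of $\nabla_x H$ differ by $\mu\,\dty(t)$, not $(1-\mu)\,\dty(t)$ (and the first arguments of $\nabla_y H$ by $\lambda\,\dtx(t)$), which is precisely what produces the coefficients $1+\gamma_1^2+\lambda^2$ and $1+\gamma_2^2+\mu^2$ with no need for any cruder majorant.
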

 \begin{proof}
 	\textbf{(i) decrease of $\h$:} Relation (\ref{dech}) follows directly from (\ref{hdec}), while (\ref{limh}) can be deduced from Theorem \ref{theorem1} (2).

 	\textbf{(ii) subgradient lower bound:} Adding $\nabla_xH(\dtx(t)+x(t), \dty(t)+y(t))$ to (\ref{paf}) and \newline $\nabla_yH(\dtx(t)+x(t), \dty(t)+y(t))$ to (\ref{pag}) one obtains for a.e. $t \geq 0$
 	\begin{align}\label{useful}
	 	& -\gamma_1 L \dtx(t) +\nabla_xH(\dtx(t)+x(t), \dty(t)+y(t)) -\nabla_xH(x(t),(1-\mu)(\dty(t)+y(t))+\mu y(t)) \notag\\
	 	\in \ & \pa_x (\tilde{f} + H) (\dtx(t)+x(t), \dty(t)+y(t))
	\end{align}
and, respectively,
	\begin{align}\label{useful2}
	 	-\gamma_2 L \dty(t) &+\nabla_yH(\dtx(t)+x(t), \dty(t)+y(t)) -\nabla_yH((1-\lambda)(\dtx(t)+x(t))+\lambda x(t),y(t)) \notag\\
	 	\in \ & \pa_y (\tilde{g}+H)(\dtx(t)+x(t), \dty(t)+y(t)),
 	\end{align}
where $\tilde f : \R^n \times \R^m \rightarrow \overline \R, \tilde{f}(x,y):=f(x)$ and $\tilde g : \R^n \times \R^m \rightarrow \overline \R, \tilde{g}(x,y):=g(y)$. Since we have 
 	$$\pa \Psi(z) = \bigg(\pa_x(\tilde{f}+H)(z), \pa_y(\tilde{g}+H)(z)\bigg) \ \mbox{for all} \ z \in \R^m \times \R^n,$$
 	we can conclude
 	\begin{align*}
	 	&\begin{pmatrix} \nabla_xH(\dtx(t)+x(t), \dty(t)+y(t)) -\nabla_xH(x(t),(1-\mu)(\dty(t)+y(t))+\mu y(t)) \\ \nabla_yH(\dtx(t)+x(t), \dty(t)+y(t)) -\nabla_yH((1-\lambda)(\dtx(t)+x(t))+\lambda x(t),y(t)) \end{pmatrix}  \\
	 	\in \ & \pa \Psi \begin{pmatrix} \dtx(t)+x(t) \\ \dty(t)+y(t) \end{pmatrix} + L \begin{pmatrix} \gamma_1 \dtx(t) \\  \gamma_2 \dty(t) \end{pmatrix} \ \mbox{for a.e.} \ t \geq 0,
 	\end{align*}
 which is equivalent to (\ref{subgrad}). Furthermore, we can estimate the subgradient using the Lipschitz-continuity of $\nabla H$ and obtain for a.e. $t \geq 0$
 	\begin{align*}
	 	&\left\| \left(\begin{pmatrix} \nabla_x H(\dtx(t)+ x(t), \dty(t)+ y(t)) - \nabla_x H(x(t), (1-\mu)(\dty(t)+y(t))+\mu y(t)) \\  \nabla_y H(\dtx(t)+ x(t), \dty(t)+ y(t)) - \nabla_y H((1-\lambda)(\dtx(t)+x(t))+\lambda x(t), y(t))\end{pmatrix}, \, 
	 	-\begin{pmatrix} \gamma_1 L \dtx(t) \\ \gamma_2 L\dty(t)\end{pmatrix}
	 	\right) \right\|^2 \\
	 	= \ &  \|\nabla_xH(\dtx(t)+x(t), \dty(t)+y(t)) - \nabla_x H(x(t), (1-\mu)(\dty(t)+y(t))+\mu y(t))\|^2 \\
	 	&+ \|\nabla_yH(\dtx(t)+x(t), \dty(t)+y(t)) - \nabla_y H((1-\lambda)(\dtx(t)+x(t))+\lambda x(t), y(t))\|^2 \\
	 	&+ \gamma_1^2 L^2 \| \dtx(t) \|^2 + \gamma_2^2 L^2 \|\dty(t))\|^2 \\
	 	\leq \ & L^2 (1+\gamma_1^2 +\lambda^2) \|\dtx(t)\|^2 + L^2 (1 + \gamma_2^2 +\mu^2) \|\dty(t)\|^2,
 	\end{align*}
 	from which statement (\ref{subgradest}) follows. 

 	\textbf{(iii) convergence to a critical point:}  Let $(\bar{x},\bar{y}) \in \omega(x,y)$ and $(t_k)_{k \geq 0}$ be such that  $(x(t_k),y(t_k))$ converges to $(\bar{x},\bar{y})$ as $k \to +\infty$. Due to Theorem \ref{theorem1} it also holds $\lim_{k \to +\infty}(\dtx(t_k),\dty(t_k)) = (0,0)$. We claim that 
 	$$\lim_{k \to +\infty} (\tilde{f}+H)(\dtx(t_k)+x(t_k), \dty(t_k)+y(t_k))  = (\tilde{f}+H)(\bar{x}, \bar{y})$$
 	and 
 	$$\lim_{k \to +\infty} (\tilde{g}+H)(\dtx(t_k)+x(t_k), \dty(t_k)+y(t_k))  = (\tilde{g}+H)(\bar{x}, \bar{y}).$$
 	In fact, since $f$ and $g$ are assumed to be lower semicontinuous, we have 
 	$$\liminf_{k \to +\infty} f(\dtx(t_k)+x(t_k)) \geq f(\bar{x})$$ 
 	and 
 	$$\liminf_{k \to +\infty} g(\dty(t_k)+y(t_k)) \geq g(\bar{y}).$$
 	On the other hand, we can conclude from the definition of the dynamical system (\ref{dynpalm}) that for any $k \geq 0$
 	\begin{align*}
 		&\dtx(t_k)+x(t_k) \\
 		\in &\argmin\limits_{u \in \R^n} \left\{f(u) + \frac{\gamma_1 L}{2} \|u-x(t_k)\|^2 + \Big\langle u - x(t_k), \nabla_xH(x(t_k), (1-\mu)(\dty(t_k)+y(t_k))+\mu y(t_k)) \Big\rangle \right\}
 	\end{align*}
 	and 
 	\begin{align*}
 	& \dty(t_k)+y(t_k) \\
 	\in & \argmin_{v \in \R^m} \left\{g(v) + \frac{\gamma_2 L}{2} \|v-y(t_k)\|^2 + \Big\langle v - y(t_k), \nabla_yH((1-\lambda)(\dtx(t_k)+ x(t_k))+\lambda x(t_k), y(t_k)) \Big\rangle \right\},
 	\end{align*}
 	which implies 
 	\begin{align*}
 	&f(\dtx(t_k)+x(t_k)) + \frac{\gamma_1 L}{2} \|\dtx(t_k)\|^2 + \Big\langle \dtx(t_k), \nabla_xH(x(t_k), (1-\mu)(\dty(t_k)+y(t_k))+\mu y(t_k)) \Big\rangle \\
 	\leq \ & f(\bar{x}) + \frac{\gamma_1 L}{2} \|\bar{x}-x(t_k)\|^2 + \Big\langle \bar{x}-x(t_k), \nabla_xH( x(t_k), (1-\mu)(\dty(t_k)+y(t_k))+\mu y(t_k)) \Big\rangle ,
 	\end{align*}
 	and
 	\begin{align*}
 		&g(\dty(t_k)+y(t_k)) + \frac{\gamma_2 L}{2} \|\dty(t_k)\|^2 + \Big\langle \dty(t_k), \nabla_yH(\dtx(t_k)+ x(t_k), y(t_k)) \Big\rangle \\
 		\leq \ & g(\bar{y}) + \frac{\gamma_2 L}{2} \|\bar{y}-y(t_k)\|^2 + \Big\langle \bar{y}-y(t_k), \nabla_yH((1-\lambda)(\dtx(t_k)+ x(t_k))+\lambda x(t_k), y(t_k)) \Big\rangle .
 	\end{align*}
 	Therefore we can conclude under consideration of the continuity of $\nabla H$
 	$$\limsup_{k \to \infty} f(\dtx(t_k)+x(t_k)) \leq f(\bar{x})$$
 	as well as
 	$$\limsup_{k \to \infty} g(\dty(t_k)+y(t_k)) \leq g(\bar{y}),$$
 	from which the claim follows. Using again continuity of $\nabla H$ we obtain for $k \to +\infty$
 	\begin{align*}
	 -\gamma_1 L \dtx(t_k) +\nabla_xH(\dtx(t_k)+x(t_k), \dty(t_k)+y(t_k)) -\nabla_xH(x(t_k),(1-\mu)(\dty(t_k)+y(t_k))+\mu y(t_k)) \rightarrow 0, \\
 		-\gamma_2 L \dty(t_k) +\nabla_yH(\dtx(t_k)+x(t_k), \dty(t_k)+y(t_k)) -\nabla_yH((1-\lambda)(\dtx(t_k)+x(t_k))+\lambda x(t_k),y(t_k))\rightarrow 0.
 	\end{align*}
Using \eqref{useful} and \eqref{useful2}, and the closedness of the graph of the limiting subdifferential, it yields
 	$$0 \in \pa_x (\tilde{f}+H)(\bar{x},\bar{y}) \text{ and } 0 \in \pa_y (\tilde{g}+H)(\bar{x}, \bar{y}),$$
 	which is equivalent to 
 	$$0 \in \pa \Psi (\bar{x},\bar{y})$$
 	and (\ref{subset}) is proven. The statement in (\ref{convcrit}) follows immediately.
 \end{proof}
 \begin{remark}
 The limit set $\omega(z)$ of the trajectory $z$ is not empty if $z(t), t \in [0,+\infty)),$ is bounded. This follows, for example, if the objective function $\Psi$ is \emph{coercive}, i.e.
 	$$\lim_{\|z\| \to +\infty} \Psi(z) = +\infty.$$
 Furthermore, in this case it also follows that $\Psi$ is bounded from below, the infimum is attained and all its sublevel sets are bounded. Furthermore, from (\ref{intdec}), again with the notation $z(t)=(x(t), y(t))$, we have for all $T>0$
 	\begin{align*}
 	\Psi(\dtx(T) + x(T), \dty(T)+y(T)) \leq \Psi(\dtx(0) + x_0, \dty(0)+y_0) + \frac{1}{2} \left\|\left(\sqrt{\gamma_1 L} \dtx(0), \,\sqrt{\gamma_2 L} \dty(0)\right)\right\|^2. 
 	\end{align*}
 Due to the boundedness of the sublevel sets of $\Psi$, we can conclude from the above estimate that $\dtz +z$ is bounded.  Since $\lim_{t \to \infty} \dtz(t) = 0$, it follows that $z$ is bounded.   
 \end{remark}
 
\begin{cor}\label{hcor}
Let $f$, $g$ and $H$ fulfill Assumption \ref{A1} and $f$, $g$ admit a chain rule. Let further $z=(x,y) : [0,+\infty) \rightarrow \R^n \times \R^m$ be a strong global solution of the dynamical system (\ref{dynpalm}) assumed to be bounded and fulfilling Assumption \ref{A4}, where the constants are chosen such that \eqref{g1} holds. Then the following statements are true:
	\begin{enumerate}
		\item $\omega(\dtz+z, z) \subseteq \crit(\h) = \{(u,u) \in \R^m \times \R^n \times \R^m \times \R^n :\; u \in \crit(\Psi) \}$;
		\item  $\lim_{t \to +\infty} \dist\big((\dtz(t)+z(t),z(t)),\omega(\dtz+z,z)\big)=0$;
		\item  the set $\omega(\dtz+z,z)$ is nonempty, compact and connected;
		\item $\h$ is finite and constant on $\omega(\dtz+z,z)$.
	\end{enumerate}
\end{cor}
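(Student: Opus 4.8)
The plan is to obtain all four assertions from Theorem \ref{theorem1} and Theorem \ref{hthm}, together with the boundedness hypothesis, by the classical analysis of $\omega$-limit sets of bounded continuous trajectories (as in \cite{haraux}).

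First I would make $\omega(\dtz+z,z)$ explicit. Since $\lim_{t\to+\infty}\dtz(t)=0$ by Theorem \ref{theorem1}(1), a point $(p,q)$ belongs to $\omega(\dtz+z,z)$ if and only if there is $t_k\to+\infty$ with $z(t_k)\to q$ and $\dtz(t_k)+z(t_k)\to p$; the first condition means $q\in\omega(z)$, and then the second forces $p=q$. Conversely, every $\bar z\in\omega(z)$ produces $(\bar z,\bar z)\in\omega(\dtz+z,z)$. Hence $\omega(\dtz+z,z)=\{(\bar z,\bar z):\bar z\in\omega(z)\}$. Combining this with $\omega(z)\subset\crit(\Psi)$ (Theorem \ref{hthm}(iii), relation \eqref{subset}) and with the formula for $\pa\h$ preceding Theorem \ref{hthm} — from which $0\in\pa\h[(x,y),(u,v)]$ forces the second block $-L(\gamma_1(x-u),\gamma_2(y-v))$ to vanish, i.e.\ $(x,y)=(u,v)$, and then $0\in\pa\Psi(x,y)$ — I obtain $\crit(\h)=\{(u,u):u\in\crit(\Psi)\}$ and the inclusion in statement~1.

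For statements 2 and 3 I would use that $t\mapsto(\dtz(t)+z(t),z(t))$ is continuous on $[0,+\infty)$ (recall $\dtz$ is absolutely continuous, hence continuous, under Assumption \ref{A4}, while $z$ is continuous) and bounded, since $z$ is bounded by hypothesis and $\dtz\to0$. Nonemptiness of $\omega(\dtz+z,z)$ follows by extracting a convergent subsequence from the sequence $(\dtz(k)+z(k),z(k))_{k\in\N}$; closedness is the standard diagonal argument and boundedness is inherited, so the limit set is compact. For connectedness I would run the usual argument: were $\omega(\dtz+z,z)$ the union of two nonempty disjoint compact sets $A$ and $B$ at distance $2\delta>0$, the continuous scalar map $t\mapsto\dist((\dtz(t)+z(t),z(t)),A)$ would take values $<\delta$ and $>\delta$ along suitable sequences tending to $+\infty$, so by the intermediate value theorem it would equal $\delta$ at times $\tau_k\to+\infty$; any limit point of $(\dtz(\tau_k)+z(\tau_k),z(\tau_k))$ then lies in $\omega(\dtz+z,z)$ at distance exactly $\delta$ from $A$, hence in neither $A$ nor $B$, a contradiction. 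Statement~2 is then routine: if $\dist((\dtz(t)+z(t),z(t)),\omega(\dtz+z,z))$ did not tend to $0$, some sequence of times would, after passing to a subsequence, yield a limit point of the trajectory lying outside $\omega(\dtz+z,z)$, contradicting the definition of the limit set.

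Finally, for statement 4 I would invoke \eqref{limh}: the limit $\ell:=\lim_{t\to+\infty}\h\big[(\dtx(t)+x(t),\dty(t)+y(t)),(x(t),y(t))\big]$ exists in $\R$. Given any $(\bar x,\bar y)\in\omega(z)$ with $z(t_k)\to(\bar x,\bar y)$, relation \eqref{convcrit} gives $\h\big[(\dtx(t_k)+x(t_k),\dty(t_k)+y(t_k)),(x(t_k),y(t_k))\big]\to\h\big[(\bar x,\bar y),(\bar x,\bar y)\big]$, and since the left-hand side also tends to $\ell$ we conclude $\h\big[(\bar x,\bar y),(\bar x,\bar y)\big]=\ell$; thus $\h$ is constant, equal to $\ell$, on $\omega(\dtz+z,z)$. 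It is finite there because $\h\big[(\bar x,\bar y),(\bar x,\bar y)\big]=\Psi(\bar x,\bar y)$ with $(\bar x,\bar y)\in\crit(\Psi)\subseteq\dom\pa\Psi\subseteq\dom\Psi$ and $\Psi>-\infty$ by Assumption \ref{A1}. I expect the connectedness step to be the only place requiring genuine care; all other parts are bookkeeping on top of the decrease property and the subgradient estimate already established in Theorem \ref{hthm}.
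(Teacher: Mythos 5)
Your proposal is correct and follows essentially the same route as the paper: statements 1, 2 and 4 are read off from Theorem \ref{theorem1} and Theorem \ref{hthm} (via the identification $\omega(\dtz+z,z)=\{(\bar z,\bar z):\bar z\in\omega(z)\}$, the formula for $\pa\h$, and \eqref{limh}--\eqref{convcrit}), exactly as the paper intends. The only difference is that you spell out the classical $\omega$-limit-set argument (continuity and boundedness of the curve, intermediate value theorem for connectedness) for statement 3, which the paper instead delegates to the cited result \cite[Theorem 4.1]{al}; your self-contained version is fine and needs nothing beyond the continuity of $\dtz$ guaranteed by Assumption \ref{A4}.
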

\begin{proof}
	The statements 1., 2. and 4. are direct consequences of Theorem \ref{hthm}. For the proof of statement 3. we refer the reader to \cite[Theorem 4.1.]{al}, where it is shown that the non-emptiness, compactness and connectedness of the limit set of a trajectory is a generic property for bounded trajectories, whose derivatives converge to $0$ for $t \to +\infty$.
\end{proof}

\begin{remark}
	In the special case when $\mu=\lambda=0$ one can prove that the statements in Theorem \ref{theorem1}, Theorem \ref{hthm} and also in Corollary \ref{hcor} remain true under the weaker assumption on $\nabla H$ that its partial gradients $\nabla_x H( \cdot ,y)$ and $\nabla_yH(x, \cdot)$ are Lipschitz continuous with global  constants $L_x$ and $L_y$ in $y$, respectively, in $x$.  This allows us to choose in the in the two inclusions of the dynamical system \eqref{dynpalm} as stepsizes $\gamma_1L_x$ and $\gamma_2L_y$, respectively.  This means that the dynamical system is of the form
	\begin{equation}
	\begin{cases}
	\dtx(t)+x(t) \in \prox_{\frac{1}{\gamma_1 L_x}f}\left(x(t)-\frac{1}{\gamma_1 L_x}\nabla_xH(x(t), \dty(t)+y(t))\right), \\
	\dty(t)+y(t)\in\prox_{\frac{1}{\gamma_2 L_y}g}\left(y(t)-\frac{1}{\gamma_2 L_y}\nabla_yH(\dtx(t)+x(t), y(t))\right), \\
	(x(0),y(0))=(x_0,y_0).
	\end{cases}
	\end{equation}
and has as discrete counterpart the PALM algorithm from \cite{bol}.

The essential ingredient in the asymptotic analysis is proof of a decreasing property for the function $t \mapsto \frac{\md}{\md t} \Psi(\dtx(t) + x(t), \dty(t)+y(t))$. While in the proof of Theorem \ref{theorem1} we used to this end the Lipschitz continuity of the full gradient $\nabla H$ when estimating the time derivative of $\Psi$ along the trajectory $(x(t)+\dtx(t), y(t)+\dty(t))$, we now obtain
	\begin{align*}
	\frac{\md}{\md t} \Psi(\dtx(t) + x(t), \dty(t)+y(t))
	 \leq  & -\gamma_1 L_x \|\dtx(t)\|^2 -\gamma_2 L_y \|\dty(t)\|^2 - \frac{\gamma_1 L_x}{2} \frac{\md}{\md t} \|\dtx(t)\|^2 - \frac{\gamma_2 L_y}{2} \frac{\md}{\md t} \|\dty(t)\|^2 \notag \\
& +L_x \|\dtx(t)\| \|\ddot{x}(t) + \dtx(t)\| +L_y \|\dty(t)\| \|\ddot{y}(t) + \dty(t)\| \ \mbox{for a.e.} \ t \geq 0.
	\end{align*}
	Therefore, (\ref{est3}) simplifies to
	\begin{align*}
	L_x \|\dtx(t)\| \|\ddot{x}(t) + \dtx(t)\| +L_y \|\dty(t)\| \|\ddot{y}(t) + \dty(t)\|
	&\leq  \sqrt{\|\ddot{x}(t)\|^2+\|\ddot{y}(t)\|^2} \sqrt{L_x^2 \|\dtx(t)\|^2+ L_y^2\|\dty(t)\|^2} \\
	&= \max{\{L_x,L_y\}} \|(\ddot{x}(t),\ddot{y}(t))\| \|(\dtx(t),\dty(t)\|\\
& \leq  \beta(\lambda, \gamma_1, \gamma_2) \max{\{L_x,L_y\}} \|(\dtx(t), \dty(t))\|^2 \! \ \mbox{for a.e.} \! \ t \geq 0,
	\end{align*}
	where $ \beta(\lambda, \gamma_1, \gamma_2)$ describes the constant from Assumption \ref{A4}. Proceeding in the same way as in the proof of Theorem \ref{theorem1}, we obtain, instead of (\ref{hdec}), the following decreasing property for a.e. $t \geq 0$
	\begin{align}
	& \frac{\md}{\md t} \left( \Psi(\dtx(t) + x(t), \dty(t)+y(t)) + \frac{1}{2}\left\|\left(\sqrt{\gamma_1 L_x} \dtx(t), \,\sqrt{\gamma_2 L_y} \dty(t)\right)\right\|^2 \right) \notag \\
	\leq & \ - \left(L_x\gamma_1 - \max{\{L_x,L_y\}}\beta  \right) \|\dtx(t)\|^2 -  \left(L_y \gamma_2-\max{\{L_x,L_y\}}\beta \right) \|\dty(t)\|^2 \\
	\leq & \ 0, \notag
	\end{align}
for $\gamma_1$ and $\gamma_2$ chosen such that
	$$\gamma_1 > \frac{\max{\{L_x,L_y\}} \beta(\lambda, \gamma_1, \gamma_2)}{L_x} \ \mbox{and} \ \gamma_2 > \frac{\max{\{L_x,L_y\}} \beta(\lambda, \gamma_1, \gamma_2)}{L_y}.$$
	
\end{remark}

\subsection{Global convergence through the KL property}

In this subsection we will show that if the Lyapunov function satisfies the \textit{Kurdyka-\L ojasiewicz (KL) property}, then  one can establishes \textit{global convergence} of the trajectory to a critical point of $\Psi$. 

To recall the definition of the KL property,  let us denote by $\Theta_\eta$, for $\eta \in [0,+\infty]$, the set of all concave and continuous functions $\vp: [0,\eta) \to \R$, which satisfy the following properties:
\begin{itemize}
	\item $\vp(0)=0$;
	\item $\vp \in C^1((0,\eta))$ and $\vp$ is continuous at 0;
	\item $\forall s \in (0,\eta): \vp'(s)>0$.
\end{itemize}

\begin{Def}[Kurdyka-\L ojasiewicz property]\label{KL}
	Let $h: \R^d \to \overline{\R}$ be proper and lower semicontinuous.
	\begin{enumerate}
		\item The function $h$ is said to satisfy the Kurdyka-\L ojasiewicz (KL) property at a point $\bar{u} \in \dom \pa h$, if there exist $\eta \in (0,+\infty]$, a neighborhood $U$ of $\bar{u}$ and a function $\vp \in \Theta_\eta$ such that for every
		$$u \in U \cap \{u \in \R^d: \: h(\bar{u})<h(u)<h(\bar{u})+\eta\}$$
		the inequality 
		$$\vp'\Big(h(u)-h(\bar{u})\Big) \dist\Big(0,\pa h (u)\Big)\geq 1$$
		holds. The function $\vp$ is called \emph{desingularizing function} for $h$ at point $\bar{u}$.
		\item If $h$ satisfies the KL property at every point in $\dom \pa h$, then $h$ is said to be a\emph{KL function}.
	\end{enumerate}
\end{Def}
To the class of KL functions belong semi-algebraic, real sub-analytic, semiconvex, uniformly
convex and convex functions satisfying a growth condition, see \cite{att3, bol2, bol}. However, for our analysis we will make use of the \textit{uniform KL property}, which follows directly from the above definition, see \cite[Lemma 3.6]{bol}.
\begin{lemma}\label{kl}
	Let $\Omega \subset \R^d$ be a compact set and $h: \R^d \to \overline \R$ a proper and lower semicontinuous function. Assume that $h$ is constant on $\Omega$ and it satisfies the KL property at each point of $\Omega$.  Then there exist $\ve, \eta >0$ and $\vp \in \Theta_\eta$ such that  for every $\bar{u} \in \Omega$ and every $u$ in the intersection
	$$\{u \in \R^d: \; \dist(u,\Omega)<\ve\} \cap \{u \in \R^d: \; h(\bar{u})<h(u)<h(\bar{u})+\eta\}$$
	the inequality
	$$\vp'\Big(h(u)-h(\bar{u})\Big)\dist\Big(0,\pa h(u)\Big)\geq 1$$
	holds.
\end{lemma}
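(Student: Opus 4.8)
The plan is a routine compactness (finite subcover) argument on $\Omega$, in which the local desingularizing functions furnished by the pointwise KL property are glued together by summation. We may assume $\Omega \neq \emptyset$, the statement being vacuous otherwise. First I would invoke Definition \ref{KL} at each point $\bar u \in \Omega$ (this is legitimate because $\Omega \subseteq \dom \pa h$, as the KL property at a point presupposes membership in $\dom \pa h$): this yields $\eta_{\bar u} \in (0,+\infty]$, a neighborhood of $\bar u$ which I shrink to an open ball $B(\bar u,\ve_{\bar u})$, and $\vp_{\bar u} \in \Theta_{\eta_{\bar u}}$ such that $\vp_{\bar u}'\big(h(u)-h(\bar u)\big)\dist\big(0,\pa h(u)\big)\geq 1$ whenever $u \in B(\bar u,\ve_{\bar u})$ and $h(\bar u)<h(u)<h(\bar u)+\eta_{\bar u}$. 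Since $\Omega$ is compact, the cover $\{B(\bar u,\ve_{\bar u}/2)\}_{\bar u \in \Omega}$ has a finite subcover $\{B(u_i,\ve_i/2)\}_{i=1}^p$, writing $\ve_i:=\ve_{u_i}$, $\eta_i:=\eta_{u_i}$, $\vp_i:=\vp_{u_i}$.

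Next I would set the global data: $\ve := \tfrac12\min_{1\leq i\leq p}\ve_i>0$, $\eta := \min_{1\leq i\leq p}\eta_i \in (0,+\infty]$, and $\vp := \sum_{i=1}^p \vp_i$ restricted to $[0,\eta)$. Since $[0,\eta)\subseteq [0,\eta_i)$ for each $i$, this sum is well defined on $[0,\eta)$, and I would check that it inherits all the defining properties of $\Theta_\eta$: concavity and continuity on $[0,\eta)$ (sums of concave/continuous functions), $C^1$ on $(0,\eta)$ with continuity at $0$, the normalization $\vp(0)=\sum_i\vp_i(0)=0$, and $\vp'(s)=\sum_{i=1}^p\vp_i'(s)>0$ on $(0,\eta)$ since each summand is positive there; hence $\vp\in\Theta_\eta$. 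I would also record the elementary covering fact $\{u: \dist(u,\Omega)<\ve\}\subseteq \bigcup_{i=1}^p B(u_i,\ve_i)$: if $\dist(u,\Omega)<\ve$, choose $\bar u\in\Omega$ with $\|u-\bar u\|<\ve$; then $\bar u\in B(u_i,\ve_i/2)$ for some $i$, so $\|u-u_i\|\leq\|u-\bar u\|+\|\bar u-u_i\|<\ve+\ve_i/2\leq\ve_i$ by the choice of $\ve$.

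Finally I would verify the uniform inequality. Let $c$ be the constant value of $h$ on $\Omega$, so $h(\bar u)=c$ for all $\bar u\in\Omega$. Fix any $\bar u\in\Omega$ and any $u$ with $\dist(u,\Omega)<\ve$ and $c=h(\bar u)<h(u)<h(\bar u)+\eta=c+\eta$. By the covering fact, $u\in B(u_i,\ve_i)$ for some $i$; since $h(u_i)=c$ and $\eta\leq\eta_i$, this gives $h(u_i)<h(u)<h(u_i)+\eta_i$, so $u$ lies in the region where the local KL inequality at $u_i$ applies, whence $\vp_i'\big(h(u)-c\big)\dist\big(0,\pa h(u)\big)\geq 1$. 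Because $h(u)-c\in(0,\eta)$ and $\vp'(h(u)-c)=\sum_{j=1}^p\vp_j'(h(u)-c)\geq\vp_i'(h(u)-c)>0$ there, multiplying by the nonnegative quantity $\dist(0,\pa h(u))$ yields $\vp'\big(h(u)-h(\bar u)\big)\dist\big(0,\pa h(u)\big)\geq\vp_i'\big(h(u)-c\big)\dist\big(0,\pa h(u)\big)\geq 1$, which is the claim.

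The argument is essentially bookkeeping and I do not anticipate a genuine obstacle. The only points deserving care are the reduction of the abstract KL neighborhoods to balls (so that the finite subcover and the $\ve$ versus $\ve_i$ estimate make sense) and the choice to \emph{sum} the $\vp_i$ rather than, say, take a pointwise maximum: summation preserves concavity and produces a $\vp'$ that dominates each $\vp_i'$ on the common interval $(0,\eta)$, which is exactly what makes the last step go through.
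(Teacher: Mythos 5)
Your argument is correct; the paper itself does not prove this lemma but cites \cite[Lemma 3.6]{bol}, and your finite-subcover construction with $\ve:=\tfrac12\min_i\ve_i$, $\eta:=\min_i\eta_i$ and $\vp:=\sum_i\vp_i$ is exactly the standard proof given there. All the delicate points (shrinking KL neighborhoods to balls, $\Omega\subseteq\dom\pa h$, the covering estimate, and that summation keeps $\vp\in\Theta_\eta$ while dominating each $\vp_i'$) are handled correctly.
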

With these preparations we are now ready to state the main result:
\begin{theorem}\label{mainresult}
			Let $f$, $g$ and $H$ fulfill Assumption \ref{A1} and $f$, $g$ admit a chain rule. Let further $z=(x,y) : [0,+\infty) \rightarrow \R^n \times \R^m$ be a strong global solution of the dynamical system (\ref{dynpalm}) assumed to be bounded and fulfilling Assumption \ref{A4}, where the constants are chosen such that \eqref{g1} holds. Moreover, suppose that the Lyapunov function $\h$, defined as in (\ref{ly}), is a KL function. Then the following statements hold:
	\begin{enumerate}
		\item $\dtz=(\dtx, \dty) \in L^1([0,\infty); \R^n \times \R^m)$;
		\item there exists $\bar{z}=(\bar{x},\bar{y}) \in \crit(\Psi)$ such that $\lim_{t \to +\infty} z(t)=\lim_{t \to +\infty} (x(t),y(t)) = (\bar{x},\bar{y})=\bar{z}.$
	\end{enumerate}
\end{theorem}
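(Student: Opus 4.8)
The plan is to run the standard KL argument built on the Lyapunov functional $\h$, using as inputs the three conclusions collected in Theorem \ref{hthm} and Corollary \ref{hcor}: the decrease estimate \eqref{dech}, the subgradient bound \eqref{subgradest}, and the fact that $\h$ is finite and takes a constant value — call it $\h_\infty$ — on the nonempty compact connected set $\omega(\dtz+z,z)$. Abbreviate $\theta(t):=\h\big[(\dtx(t)+x(t),\dty(t)+y(t)),(x(t),y(t))\big]$. By \eqref{dech} the map $\theta$ is nonincreasing, by \eqref{limh} it converges, and since $\dist\big((\dtz(t)+z(t),z(t)),\omega(\dtz+z,z)\big)\to 0$ (Corollary \ref{hcor}(2)) together with lower semicontinuity of $\h$ one identifies $\lim_{t\to+\infty}\theta(t)=\h_\infty$; in particular $\theta(t)\ge\h_\infty$ for all $t\ge 0$.

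First I would dispose of the degenerate case. If $\theta(t_0)=\h_\infty$ for some $t_0\ge 0$, then monotonicity forces $\theta\equiv\h_\infty$ on $[t_0,+\infty)$, hence by \eqref{dech} we get $\dtz(t)=0$ for a.e. $t\ge t_0$, so $z$ is constant on $[t_0,+\infty)$ and both assertions follow immediately, with $\bar z=z(t_0)\in\crit(\Psi)$ by \eqref{subset}. So from now on assume $\theta(t)>\h_\infty$ for all $t\ge 0$. I then apply the uniform KL property, Lemma \ref{kl}, to $\h$ on the compact set $\omega(\dtz+z,z)$, on which $\h$ is constant by Corollary \ref{hcor}(4): this produces $\ve,\eta>0$ and $\vp\in\Theta_\eta$ such that $\vp'\big(\h(w)-\h_\infty\big)\dist\big(0,\pa\h(w)\big)\ge 1$ whenever $\dist\big(w,\omega(\dtz+z,z)\big)<\ve$ and $\h_\infty<\h(w)<\h_\infty+\eta$. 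Since $\dist\big((\dtz(t)+z(t),z(t)),\omega(\dtz+z,z)\big)\to 0$ and $\theta(t)\downarrow\h_\infty$, there is $t_1\ge 0$ such that $w(t):=\big((\dtx(t)+x(t),\dty(t)+y(t)),(x(t),y(t))\big)$ satisfies these two conditions for all $t\ge t_1$, so that $\vp'(\theta(t)-\h_\infty)\dist(0,\pa\h(w(t)))\ge 1$ there. Combining with the explicit subgradient of \eqref{subgrad}–\eqref{subgradest}, which gives $\dist(0,\pa\h(w(t)))\le C\,\|\dtz(t)\|$ with $C:=L\sqrt{\max\{(1+\gamma_1^2+\lambda^2),(1+\gamma_2^2+\mu^2)\}}$, we obtain $\vp'(\theta(t)-\h_\infty)\ge\big(C\,\|\dtz(t)\|\big)^{-1}$ for a.e. $t\ge t_1$ with $\dtz(t)\neq 0$.

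Now the core of the argument. The composition $t\mapsto\vp(\theta(t)-\h_\infty)$ is locally absolutely continuous on $[t_1,+\infty)$ by Remark \ref{ac}(ii), since $\theta$ is locally absolutely continuous and $\vp$ is $C^1$ on $(0,\eta)$; hence it is differentiable a.e., and for a.e. $t\ge t_1$, using the chain rule, the decrease \eqref{dech} and the KL inequality,
\begin{align*}
-\frac{\md}{\md t}\vp(\theta(t)-\h_\infty)
= -\vp'(\theta(t)-\h_\infty)\,\frac{\md}{\md t}\theta(t)
\ge \vp'(\theta(t)-\h_\infty)\,\min\{m_1,m_2\}\,\|\dtz(t)\|^2
\ge \frac{\min\{m_1,m_2\}}{C}\,\|\dtz(t)\|,
\end{align*}
where the last inequality holds trivially (both sides equal to $0$) on the set $\{\dtz(t)=0\}$ and follows from the bound on $\vp'$ otherwise. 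Integrating from $t_1$ to $T$ and using $\vp\ge 0$ on $[0,\eta)$ (as $\vp(0)=0$ and $\vp'>0$) yields $\int_{t_1}^{T}\|\dtz(t)\|\,\md t\le \frac{C}{\min\{m_1,m_2\}}\,\vp(\theta(t_1)-\h_\infty)$; letting $T\to+\infty$ and adding the finite integral over $[0,t_1]$ gives $\dtz=(\dtx,\dty)\in L^1([0,+\infty);\R^n\times\R^m)$, which is statement 1. Statement 2 then follows from the Cauchy criterion: $\|z(t)-z(s)\|=\big\|\int_s^t\dtz(r)\,\md r\big\|\le\int_s^t\|\dtz(r)\|\,\md r$, and the right-hand side tends to $0$ as $s,t\to+\infty$ because $\dtz\in L^1$, so $\lim_{t\to+\infty}z(t)=:\bar z=(\bar x,\bar y)$ exists; consequently $\omega(z)=\{\bar z\}$, and $\bar z\in\crit(\Psi)$ by \eqref{subset}.

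The main obstacle I anticipate is not conceptual but the careful handling of the set $\{\dtz(t)=0\}$, on which the division in the KL step is invalid — which is exactly why the initial reduction to the case $\theta(t)>\h_\infty$ for all $t$ is needed — together with the verification that $t\mapsto\vp(\theta(t)-\h_\infty)$ is absolutely continuous, so that the chain-rule step and the subsequent integration are legitimate and the usual "the sum of two absolutely continuous functions differentiated a.e." manipulations apply.
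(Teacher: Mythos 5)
Your proposal is correct and follows essentially the same route as the paper's proof: the same case distinction (trajectory value of $\h$ hitting the limit value versus staying strictly above it), the uniform KL property of Lemma \ref{kl} on $\omega(\dtz+z,z)$ combined with the decrease estimate \eqref{dech} and the subgradient bound \eqref{subgradest}, integration of $\frac{\md}{\md t}\vp(\cdot)$ to get $\dtz\in L^1$, and the Cauchy argument for convergence to a critical point via \eqref{subset}. Your extra care about the set where $\dtz(t)=0$ and the local absolute continuity of $t\mapsto\vp(\theta(t)-\h_\infty)$ only makes explicit what the paper leaves implicit.
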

\begin{proof}
	Choose $\bar{z} \in \crit{(\Psi)}$ such that $(\bar{z},\bar{z}) \in \omega(\dtz+z,z)$. From Theorem \ref{hthm} and Corollary \ref{hcor} we also have $\lim_{t \to +\infty} \h[\dtz(t)+z(t),z(t)] = \h[\bar{z},\bar{z}]$ and $\bar{z} \in \crit{(\Psi)}$. We distinguish between two cases:
	\begin{description}
		\item[Case I:] {\bf $\exists \bar{t} \geq 0$ such that $\h[\dtz(\bar{t})+z(\bar{t}),z(\bar{t})] = \h[\bar{z},\bar{z}]$.}  

Since from Theorem \ref{hthm} (i) we have that $\h$ is non-increasing along trajectories, we obtain for all $t \geq \bar{t}$
		$$\h[\dtz(t)+z(t),z(t)] \leq \h[\dtz(\bar{t})+z(\bar{t}),z(\bar{t})] = \h[\bar{z},\bar{z}],$$
		and therefore 
		$$t \mapsto \h[\dtz(t)+z(t),z(t)]$$ 
		is constant on $[\bar{t}, +\infty)$. Due to the subgradient estimate in Theorem \ref{hthm} (ii), we obtain for a.e. $t \in [\bar{t},+\infty)$ that $\dtz(t)=(\dtx(t),\dty(t))=0$ and, hence, $z(t)=(x(t),y(t))$ is constant on $[\bar{t},+\infty)$, from which the conclusion follows.
		\item[Case II:] {\bf $\forall t \geq 0: \; \h[\dtz(t)+z(t), z(t)]>\h[\bar{z},\bar{z}]$.}

		We use Lemma \ref{kl} for $\Omega :=\omega(\dtz+z,z)$ and $h := \h$. According to it, there exist $\ve, \eta >0$ and $\vp \in \Theta_\eta$ such that for all $(\bar{z},\bar{z}) \in \omega(\dtz+z,z)$ and all $(v,w)$ in the intersection
		\begin{align*}
			\mathcal{A}:= & \{(v,w) \in\R^n \times \R^m \times \R^n \times \R^m : \; \dist((v,w),\omega(\dtz+z,z))<\ve\} \ \cap \\
			& \{(v,w) \in\R^n \times \R^m \times \R^n \times \R^m : \; \h[\bar{z},\bar{z}]<\h[v,w]<\h[\bar{z},\bar{z}]+\eta\}
		\end{align*}
		one has 
		\begin{align*}
			\vp'\Big(\h[v,w]-\h[\bar{z},\bar{z}]\Big)\dist{\Big((0,0),\pa \h[v,w]\Big)} \geq 1.
		\end{align*}
	\end{description}
	On one hand, since 
	$$\lim_{t \to +\infty} \h[\dtz(t)+z(t),z(t)]= \h[\bar{z},\bar{z}],$$
there exists	$t_1 \geq 0$ such that 
	$$\h[\bar{z},\bar{z}]<\h[v,w]<\h[\bar{z},\bar{z}]+\eta$$ 
	for every $t \geq t_1$. On the other hand, from 
	$$\lim_{t \to +\infty} \dist{\Big((\dtz(t)+z(t),z(t)),\omega(\dtz +z, z)\Big)}=0,$$
	there exist $t_2\geq 0$ such that 
	$$\dist{\Big((\dtz(t)+z(t),z(t)),\omega(\dtz +z, z)\Big)}<\ve.$$
	Therefore, for a.e. $t \geq T:=\max{\{t_1,t_2\}}$ it holds 
	$$\Big(\dtz(t)+z(t),z(t)\Big) \in \mathcal{A},$$
	and hence
	$$\vp'\Big(\h[\dtz(t)+z(t),z(t)]-\h[\bar{z},\bar{z}]\Big)\dist{\Big((0,0),\pa \h[\dtz(t)+z(t),z(t)]\Big)} \geq 1.$$
	From here we can conclude using  (\ref{subgrad}) and (\ref{subgradest}) that
	\begin{align}\label{estimate1}
		1 \leq \vp'\Big(\h[\dtz(t)+z(t),z(t)]-\h[\bar{z},\bar{z}]\Big)  L\sqrt{\max{\left\{(1+\gamma_1^2 +\lambda^2), (1 + \gamma_2^2 +\mu^2)\right\}}} \|(\dtz(t)\| \ \mbox{for a.e.} \ t \geq T.
	\end{align}
This yields the following estimation for the time derivative
	\begin{align}\label{desest}
	&\frac{\md}{\md t}\vp\Big(\h[\dtz(t)+z(t),z(t)]-\h[\bar{z},\bar{z}]\Big) \notag \\
	= & \ \vp'\Big(\h[\dtz(t)+z(t),z(t)]-\h[\bar{z},\bar{z}]\Big) \frac{\md}{\md t} \h[\dtz(t)+z(t),z(t)] \\
	\leq & \-\min{\{m_1, m_2\}}\left(L\sqrt{\max{\left\{(1+\gamma_1^2 +\lambda^2), (1 + \gamma_2^2 +\mu^2)\right\}}}\right)^{-1}\|\dtz(t)\| \leq 0 \notag \ \mbox{for a.e.} \ t \geq T,
	\end{align}
which is due to the decreasing property (\ref{dech}) of the Lyapunov-function $\h$ and inequality (\ref{estimate1}). After integration, since $\vp$ is bounded from below, we can deduce
	$$\dtz \in L^1([0,\infty);\R^n \times \R^m).$$
This yields that the limit $\lim_{t \to +\infty} z(t)$ exists, from which the conclusion follows. 
\end{proof}

\subsection{Convergence rates}

This subsection is dedicated to the derivation convergence rates of the trajectory of the dynamical system (\ref{dynpalm}) to a critical point of $\Psi$. Convergece rate sesults (see \cite{ab}) can be achieved considering a KL function, which satisfy Definition \ref{KL} with desingularizing function 
\begin{align}\label{des}
	\vp(s)=\frac{1}{c} s^{1-\theta}, \quad \theta \in [0,1),\;  c >0.
\end{align}

\begin{Def}[\L ojasiewicz property]
	Let $h: \R^d \to \overline{\R}$ be proper and lower semicontinuous.
\begin{enumerate}
		\item The function $h$ is said to satisfy the \L ojasiewicz property at a point $\bar{u} \in \dom \pa h$, if there exist $\eta \in (0,+\infty]$, a neighborhood $U$ of $\bar{u}$, $c >0$ and $\theta \in [0,1)$ such that for every
$$u \in U \cap \{u \in \R^d: \: h(\bar{u})<h(u)<h(\bar{u})+\eta\}$$ 
it holds
$$|h(u) - h(\bar u)|^\theta \leq c \|\xi\| \ \mbox{for every} \ \xi \in \pa h(u).$$
The exponent $\theta$ is called the \textit{\L ojasiewicz exponent}.
		\item If $h$ has the KL property and has the same \L ojasiewicz exponent $\theta$ at every point in $\dom \pa h$, then we say that $h$ is a \L ojasiewicz function with an exponent of $\theta$.
	\end{enumerate}
\end{Def}
 It is known that analytic functions and proper lower semicontinuous semi-algebraic functions have the  \L ojasiewicz property  (see \cite{att3, loj}). Calculus rules for the \L ojasiewicz exponent have been developed in \cite{lipong}.

\begin{theorem}
		Let $f$, $g$ and $H$ fulfill Assumption \ref{A1} and $f$, $g$ admit a chain rule. Let further $z=(x,y) : [0,+\infty) \rightarrow \R^n \times \R^m$ be a strong global solution of the dynamical system (\ref{dynpalm}) assumed to be bounded and fulfilling Assumption \ref{A4}, where the constants are chosen such that \eqref{g1} holds. Moreover, suppose that the Lyapunov function $\h$, defined as in (\ref{ly}), has the \L ojasiewicz property. Then there exists $\bar{z} = (\bar{x}, \bar{y}) \in \crit{(\Psi)}$ such that $\lim_{t \to +\infty} z(t)=\lim_{t \to +\infty} (x(t),y(t)) = (\bar{x},\bar{y})=\bar{z}$. Let $\theta \in [0,1)$ be the \L ojasiewicz exponent of $\h$ at $(\bar{z},\bar{z})$. Then there exist $\alpha, \beta, \gamma, \delta >0$ and a time $T \geq 0$ such the following statements are true:
	\begin{enumerate}
		\item if $\theta \in \left[0, \frac{1}{2}\right)$, then $z=(x,y)$ converges in finite time;
		\item if $\theta = \frac{1}{2}$, then $\|z(t)-\bar{z}\| \leq \alpha e^{-\beta t} \ \forall t \geq T$; 
		\item if $\theta \in \left(\frac{1}{2}, 1\right)$, then $\|z(t)-\bar{z}\| \leq (\gamma t+\delta)^{-\frac{1-\theta}{2\theta -1}} \ \forall t \geq T$.
	\end{enumerate} 
\end{theorem}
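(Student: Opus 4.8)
The plan is to build on the already-established machinery: global convergence (Theorem \ref{mainresult}) gives the existence of $\bar z \in \crit(\Psi)$ with $z(t) \to \bar z$, and the decrease inequality \eqref{dech} together with the subgradient estimate \eqref{subgradest} are the two ingredients that feed the \L ojasiewicz argument. First I would introduce the auxiliary function $\sigma(t) := \int_t^{+\infty} \|\dtz(s)\|\,\md s$, which is well-defined for $t \geq T$ since $\dtz \in L^1$ by Theorem \ref{mainresult}; note $\|z(t) - \bar z\| \leq \sigma(t)$ because $z(t) - \bar z = -\int_t^{+\infty}\dtz(s)\,\md s$, so it suffices to obtain decay rates for $\sigma$. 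One also records $\dot\sigma(t) = -\|\dtz(t)\|$ for a.e. $t$.

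Next I would combine the \L ojasiewicz inequality with exponent $\theta$ applied to $\h$ at $(\bar z,\bar z)$ (valid on the set $\mathcal A$ as in the proof of Theorem \ref{mainresult}, for $t \geq T$ large enough) with \eqref{subgradest}: writing $\varphi(t) := \h[\dtz(t)+z(t),z(t)] - \h[\bar z,\bar z] \geq 0$ (the Case II situation; Case I gives finite-time convergence exactly as before), the \L ojasiewicz inequality yields $\varphi(t)^\theta \leq c\, L\sqrt{\max\{1+\gamma_1^2+\lambda^2,\,1+\gamma_2^2+\mu^2\}}\,\|\dtz(t)\|$, i.e. $\varphi(t)^\theta \leq C_1 \|\dtz(t)\|$ for a suitable constant $C_1$. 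On the other hand, \eqref{dech} reads $\frac{\md}{\md t}\varphi(t) \leq -\min\{m_1,m_2\}\,\|\dtz(t)\|^2 =: -C_2\|\dtz(t)\|^2$. The standard trick is now to estimate, for a.e. $t \geq T$,
\begin{align*}
-\frac{\md}{\md t}\big(\varphi(t)^{1-\theta}\big) = -(1-\theta)\varphi(t)^{-\theta}\frac{\md}{\md t}\varphi(t) \geq (1-\theta)\frac{C_2\|\dtz(t)\|^2}{C_1\|\dtz(t)\|} = \frac{(1-\theta)C_2}{C_1}\|\dtz(t)\| = -\frac{(1-\theta)C_2}{C_1}\dot\sigma(t),
\end{align*}
which after integration over $[t,+\infty)$ (using $\varphi \to 0$) gives the fundamental bound $\sigma(t) \leq \frac{C_1}{(1-\theta)C_2}\,\varphi(t)^{1-\theta}$ for $t \geq T$. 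Thus $\|z(t)-\bar z\| \leq C_3\,\varphi(t)^{1-\theta}$, and the whole problem reduces to estimating the decay of $\varphi$.

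For the decay of $\varphi$, I would feed $\varphi^\theta \leq C_1\|\dtz\| = -C_1\dot\sigma$ back together with $\sigma \leq C_3'\varphi^{1-\theta}$, i.e. $\varphi \geq (\sigma/C_3')^{1/(1-\theta)}$, to obtain a differential inequality for $\sigma$ alone: $-\dot\sigma(t) \geq C_1^{-1}\varphi(t)^\theta \geq C_1^{-1}(C_3')^{-\theta/(1-\theta)}\sigma(t)^{\theta/(1-\theta)}$, say $\dot\sigma(t) \leq -C_4\,\sigma(t)^{\theta/(1-\theta)}$. Setting $\rho := \theta/(1-\theta)$, one analyzes this ODE inequality in the three regimes: when $\theta \in [0,\tfrac12)$ we have $\rho \in [0,1)$, and a Gronwall-type comparison forces $\sigma$ (hence $z$) to reach $0$ in finite time; when $\theta = \tfrac12$ then $\rho = 1$ and $\sigma(t) \leq \sigma(T)e^{-C_4(t-T)}$, giving the exponential bound $\|z(t)-\bar z\| \leq \alpha e^{-\beta t}$; when $\theta \in (\tfrac12,1)$ then $\rho > 1$ and integrating $\dot\sigma \leq -C_4\sigma^\rho$ yields $\sigma(t) \leq (C_4(\rho-1)(t-T) + \sigma(T)^{1-\rho})^{-1/(\rho-1)}$, and since $1/(\rho-1) = (1-\theta)/(2\theta-1)$ this is exactly the claimed polynomial rate $\|z(t)-\bar z\| \leq (\gamma t + \delta)^{-(1-\theta)/(2\theta-1)}$. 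The constants $\alpha,\beta,\gamma,\delta$ are read off from $C_4$, $\sigma(T)$ and $T$.

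The main obstacle is not conceptual but bookkeeping: one must be careful that all the estimates above hold only for a.e. $t \geq T$ with $T$ chosen (as in Theorem \ref{mainresult}) so that $(\dtz(t)+z(t),z(t))$ lies in the set $\mathcal A$ where the \L ojasiewicz inequality is available, and one must handle the degenerate possibility that $\|\dtz(t)\| = 0$ on a set of positive measure (there the differential inequalities are trivially satisfied). A second delicate point is passing from the differential inequality for the absolutely continuous function $\sigma$ to the explicit closed-form bounds — this requires a comparison lemma for a.e.-differentiable monotone functions rather than the classical ODE uniqueness theorem; the finite-time case $\theta < \tfrac12$ in particular needs the observation that once $\sigma(t_0)=0$ the trajectory is stationary thereafter (which also matches Case I). Everything else is a direct transcription of the discrete-time argument of \cite{ab, att3} into the continuous setting, using \eqref{dech} and \eqref{subgradest} in place of their discrete analogues.
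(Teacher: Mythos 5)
Your proposal is correct and follows essentially the same route as the paper's proof: Case I handled via finite-time stationarity, then the arc-length function $\sigma(t)=\int_t^{+\infty}\|\dtz(s)\|\,\md s$, the combination of the decrease property \eqref{dech}, the subgradient bound \eqref{subgradest} and the \L ojasiewicz inequality to obtain first $\sigma(t)\lesssim\big(\h[\dtz(t)+z(t),z(t)]-\h[\bar z,\bar z]\big)^{1-\theta}$ and then the differential inequality $\dot\sigma(t)\leq -C\,\sigma(t)^{\theta/(1-\theta)}$, followed by the same three-case integration. The only cosmetic difference is that you rederive the intermediate bound by an explicit chain-rule computation on $\varphi^{1-\theta}$ rather than citing the inequality \eqref{desest} already established in the proof of Theorem \ref{mainresult}, which is the same estimate.
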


\begin{proof}
	If there exists $\bar{t} \geq 0$ such that $\h[\dtz(\bar{t})+z(\bar{t}),z(\bar{t})] = \h[\bar{z},\bar{z}]$, then the trajectory converges in finite time, as we saw in the first case of the proof of Theorem  \ref{mainresult}.

Let us therefore assume that for all $t\geq 0$ the inequality $\h[\dtz(t)+z(t), z(t)]>\h[\bar{z},\bar{z}]$ holds. We define the \textit{arc-length} of the trajectory $z$ from time $t \geq 0$ on by
	\begin{align*}
	\sigma(t) := \int_{t}^{+\infty} \|\dtz(s)\| \; \md s.
	\end{align*}
	We immediately obtain
	\begin{align}\label{diffarc}
	\|\dtz(t)\| = -\dot{\sigma}(t) \ \mbox{for a.e.} \ t \geq 0,
	\end{align}
	and by calculating for some time $\tau \geq t$
	\begin{align*}
	\|z(t)-\bar{z}\| = \left\| z(\tau) - \bar{z} - \int_t^\tau \dtz(s) \; \md s \right\| \leq \|z(\tau) - \bar{z}\| + \int_t^\tau \|\dtz(s)\| \; \md s,
	\end{align*}
	and then taking the limit $\tau \to +\infty$, we obtain the following estimate for the distance of the trajectory to its limit point $\bar z$
	\begin{align*}
	\|z(t)-\bar{z}\| \leq \sigma(t) \ \mbox{for all} \ t \geq 0.
	\end{align*}
	From the considerations in (\ref{desest}) in the proof of Theorem \ref{mainresult}, while taking into account the special shape of the desingularizing function (\ref{des}), we get that there exists $\; T >0$ such that for a.e $t \geq  T$
	\begin{align*}
		M_1\|\dtz(t)\| + c \frac{d}{dt}\Big(\h[\dtz(t)+z(t), z(t)]-\h[\bar{z},\bar{z}]\Big)^{1-\theta} \leq 0,
	\end{align*}
	where we define
	$$M_1:=\min{\{m_1, m_2\}} \left(L\sqrt{\max{\left\{(1+\gamma_1^2 +\lambda^2), (1 + \gamma_2^2 +\mu^2)\right\}}}\right)^{-1}.$$
	Integration from $t$ to $\widetilde{T}$, where $\widetilde{T} \geq T$, yields
	\begin{align*}
		M_1 \int_t^{\widetilde{T}} \|\dtz(s)\| \; \md s + c  \Big(\h[\dtz(\widetilde{T})+z(\widetilde{T}), z(\widetilde{T})] - \h[\bar{z},\bar{z}]\Big)^{1-\theta} \leq c \left(\h[\dtz(t)+z(t), z(t)] - \h[\bar{z},\bar{z}]\right)^{1-\theta},
	\end{align*}
	which, after letting $\tilde{T} \to +\infty$, produces the estimate
	\begin{align}\label{arcest2}
		M_1 \sigma(t) \leq c \Big(\h[\dtz(t)+z(t), z(t)] - \h[\bar{z},\bar{z}]\Big)^{1-\theta}
	\end{align}
for a.e $t \geq  T$. In Theorem  \ref{hthm} (ii) we provided an element $(\xi(t), \eta(t)) \in \pa \h[\dtz(t)+z(t), z(t)]$ such that 
	\begin{align*}
		\|(\xi(t), \eta(t))\| \leq M_2\|\dtz(t)\| \quad \ \mbox{for a.e.} \ t \geq 0,
	\end{align*}
	where we defined
	$$M_2:=L\sqrt{\max{\left\{(1+\gamma_1^2 +\lambda^2), (1 + \gamma_2^2 +\mu^2)\right\}}}.$$
Using that $\h$ has the  \L ojasiewicz property $\theta$ at $(\bar{z},\bar{z})$ we further have that for a.e. $t \geq T$ (after potentially increasing $T$ in order to guarantee that $(\dtz(t)+z(t), z(t))$ is in the neighborhood on which the \L ojasiewicz property is fulfilled)
	\begin{align*}
		\Big(\h[\dtz(t)+z(t), z(t)] - \h[\bar{z}, \bar{z}]\Big)^\theta \leq c \|(\xi(t), \eta(t))\| \leq c M_2\|\dtz(t)\|.
	\end{align*}
	Combining this with estimate (\ref{arcest2}) gives
	\begin{align*}
		M_1 \sigma(t) \leq c^{\frac{1}{\theta}} M_2^{\frac{1-\theta}{\theta}} \|\dtz(t)\|^{\frac{1-\theta}{\theta}} \quad \ \mbox{for a.e.} \ t \geq T.
	\end{align*}
Taking into account (\ref{diffarc}), we are able to conclude
	\begin{align}\label{convergenceinequality}
		\dot{\sigma}(t) \leq - C \sigma(t)^{\frac{\theta}{1-\theta}} \quad \ \mbox{for a.e.} \ t \geq T,
	\end{align}
	where we denote
	$$C:=\frac{M_1^{\frac{\theta}{1-\theta}}}{c^{\frac{1}{1-\theta}} M_2} >0.$$
	From (\ref{convergenceinequality}) we now can deduce the speed of convergence of the trajectory. To this end we distinguish between different values for the \L ojasiewicz exponent $\theta$. 
	\begin{itemize}
		\item If $\theta =\frac{1}{2}$, then (\ref{convergenceinequality}) simplifies to 
		$$\dot{\sigma}(t) \leq - C \sigma(t),$$
		from which exponential convergence
		$$\|z(t)-\bar{z}\| \leq \sigma(t) \leq \alpha e^{-\beta t} \quad \text{for a.e.} \ t \geq T,$$
		with $\alpha = \sigma(T)$ and $\beta=C$, follows by using Gronwall's inequality.
		\item If $\theta \in \left[0,\frac{1}{2}\right)$, then (\ref{convergenceinequality}) yields
		$$\frac{d}{dt} \sigma (t)^{\frac{1-2\theta}{1-\theta}} = \frac{1-2\theta}{1-\theta} \sigma (t)^{-\frac{\theta}{1-\theta}} \dot{\sigma}(t) \leq - C \frac{1-2\theta}{1-\theta} \quad \ \mbox{for a.e.} \ t \geq T.$$
After integration we obtain
		$$\sigma (t)^{\frac{1-2\theta}{1-\theta}} \leq -c_1 t +c_2 \ \mbox{for all} \ t \geq T,$$
for some constants $c_1 , c_2 >0$. This means that there has to exists some $\overline{T}>0$ such that $\sigma(t) = 0$ for all $t \geq \overline{T}$, which implies that $z(t)$ has to be constant on $[\overline T, +\infty)$.
		\item Last, we investigate the case $\theta \in \left(\frac{1}{2}, 1\right)$. Since $\frac{1-2\theta}{1-\theta}<0$ we obtain similarly to the calculation above 
		$$\frac{d}{dt} \sigma (t)^{\frac{1-2\theta}{1-\theta}} \geq C \frac{2\theta - 1}{1-\theta} \quad \ \mbox{for a.e.} \ t \geq T.$$
		Again, by integrating the inequality above we obtain for some constants $\gamma, \delta \in \R$
$$\sigma(t) \leq (\gamma t + \delta)^{\frac{1-2\theta}{1-\theta}} \ \mbox{for all} \ t \geq T.$$
\end{itemize}
\end{proof}

\section{Numerical experiments}
In this last section we aim to illustrate the analytical results with some numerical simulations. Especially, we want to focus on the effect the parameters $\lambda$ as well the stepsizes $\gamma_1$ and $\gamma_2$ have on the asymptotic behavior of the solution trajectories.
As in Section \ref{sec3}, we consider the case $\mu = 1$, where the dynamical system can be written in the explicit form (\ref{dynpalmmu1}). The simulations were carried out in \textsc{Matlab}, where we used the function ode15s.

\begin{figure}[h]
	\centering
	\includegraphics[width=10cm]{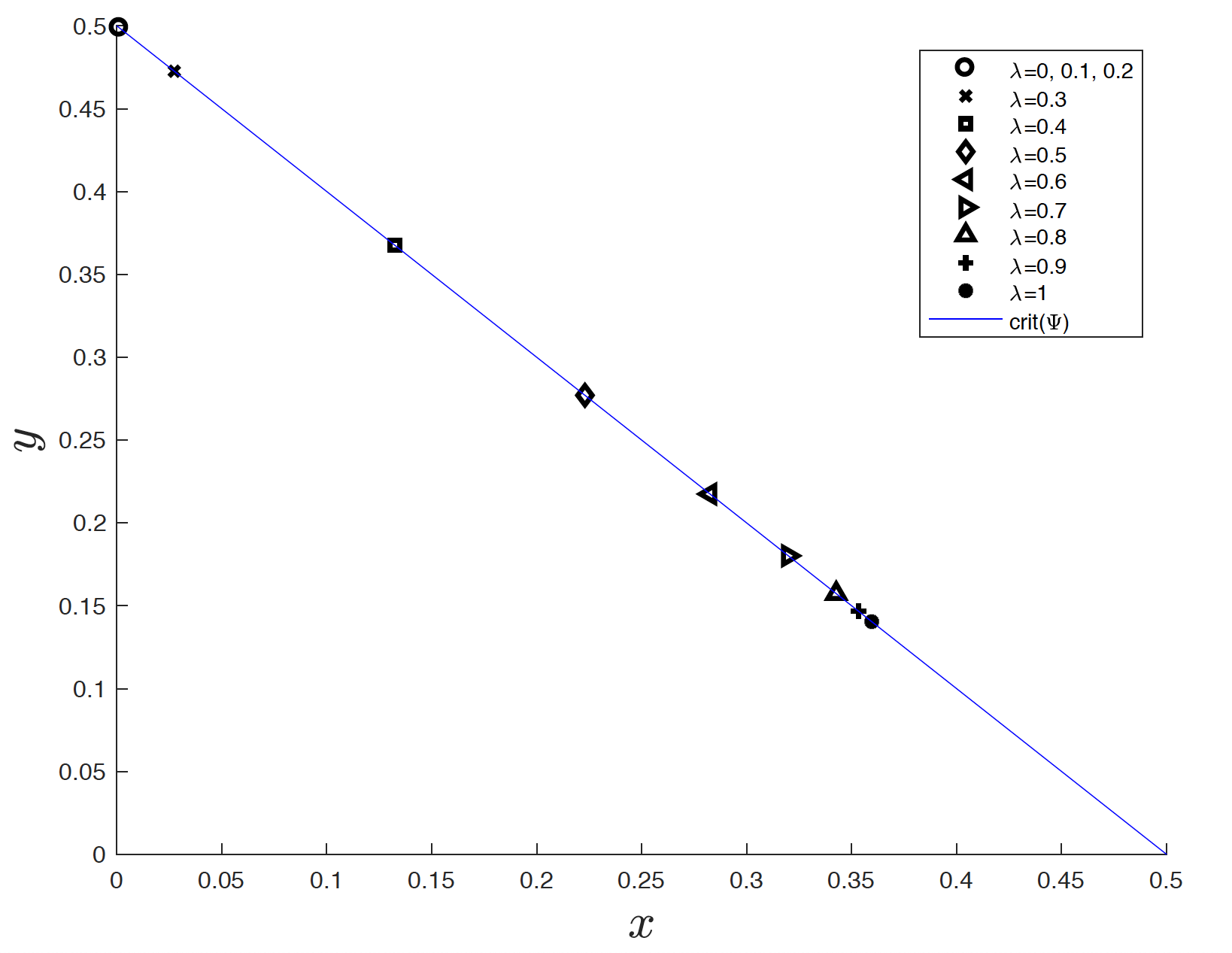}
	\caption{For $c_1=c_2=1$ and $(x_0,y_0)=\left(1,\frac{1}{2}\right)$ the limits of the trajectory change according to the choice of $\lambda$. While for $\lambda \in [0,\:0.3)$ (rather implicit regime), the solution converges to the critical point $(0,\frac{1}{2})$, for $\lambda \in [0.3,\: 1]$ the trajectory approaches other points further down the line of $\crit(\Psi)$. The points are plotted after 100 time steps.}
	\label{limits}
\end{figure}

\begin{example}
	In the first experiment, we investigated the optimization problem
	\begin{equation}\label{experiment1}
		\min_{(x,y) \in \R \times \R} \Psi(x,y) := |x|+|y|+(1-x-y)^2,
	\end{equation}
which corresponds to problem (\ref{p}) in the special setting $f,\:g: \R \to \R$,  $f=g=|\cdot|$, and $H: \R \times \R \to \R$, $H(x,y)=(1-x-y)^2$. The critical points of $\Psi$ are given by the set $$\crit(\Psi) = \left\{(\bar{x},\bar{y}) \in \R_+ \times \R_+: \bar{x}+\bar{y}=\frac{1}{2} \right\},$$
where all elements of this set are optimal solutions for (\ref{experiment1}). We chose $(x_0,y_0)=\left(1,\frac{1}{2}\right)$ as starting point. 

\begin{figure}[h]
	\begin{subfigure}{}
	\includegraphics[width=8.2cm]{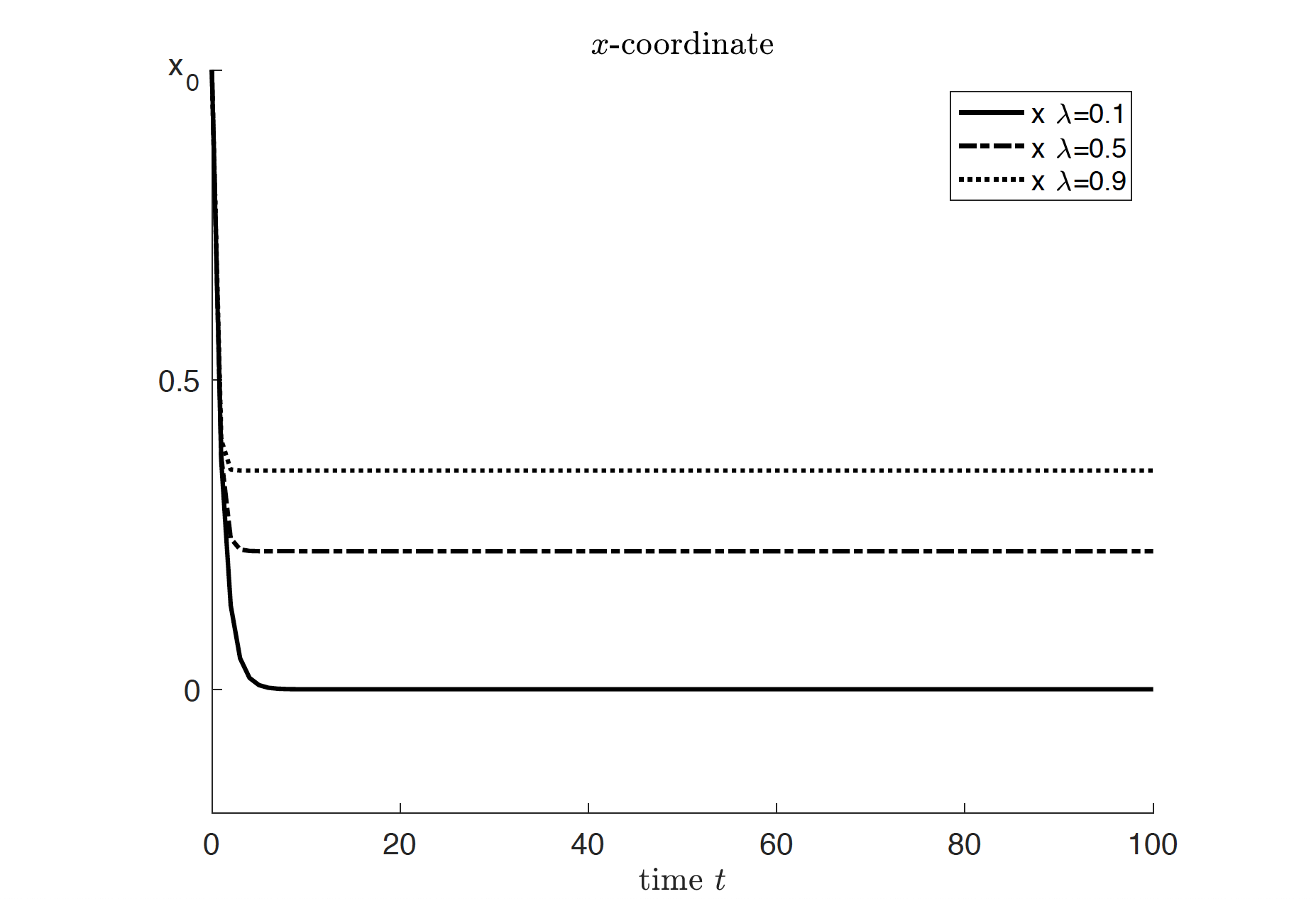}
	\end{subfigure}
	\hspace{0.2em} 
	\begin{subfigure}{}
		\includegraphics[width=8.2cm]{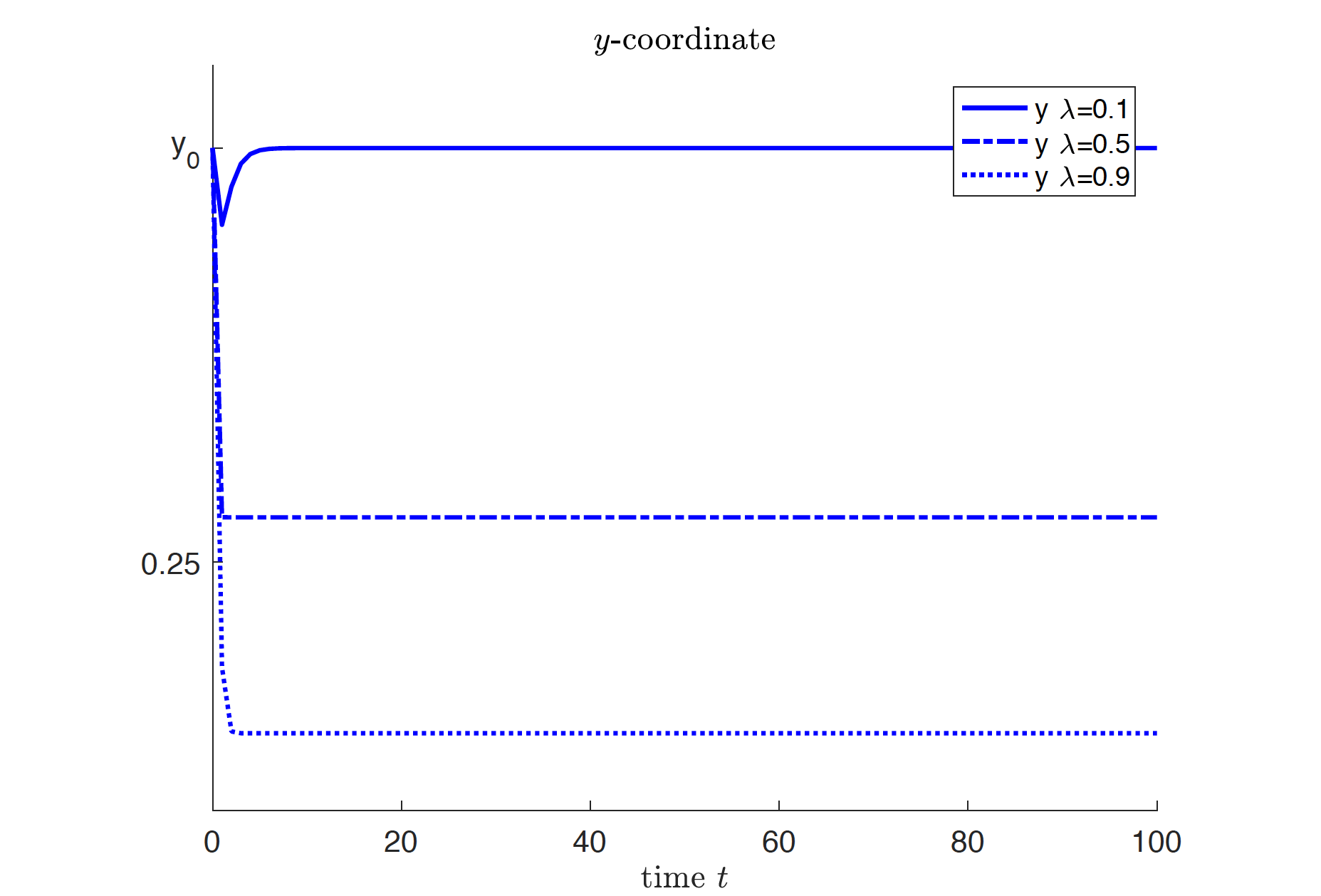}
	\end{subfigure}
	\caption{For $c_1=c_2=1$ and $(x_0,y_0)=\left(1,\frac{1}{2}\right)$,  three different solution trajectories approaching three different critical values are plotted. \emph{On the left:} for $\lambda=0.1$, the $x$-coordinate of the solution $(x(t),y(t))$ converges to $\bar{x}=0$ (black solid), for $\lambda=0.5$, to $\bar{x}=0.223$ (black dashed) and, for $\lambda=0.9$, to $\bar{x}=0.3533$ (black dotted). \emph{On the right:} for for $\lambda=0.1$,  $y(t)$ converges to $\bar{y}=0.5$ (blue solid), for $\lambda=0.5$, to $\bar{y}=0.277$,  (blue dashed) and, for $\lambda=0.9$, to $\bar{y}=0.1467$ (blue dotted).}
	\label{plot1}
\end{figure}

\begin{figure}[H]
	\centering
	\begin{subfigure}{} 
		\includegraphics[width=8.2cm]{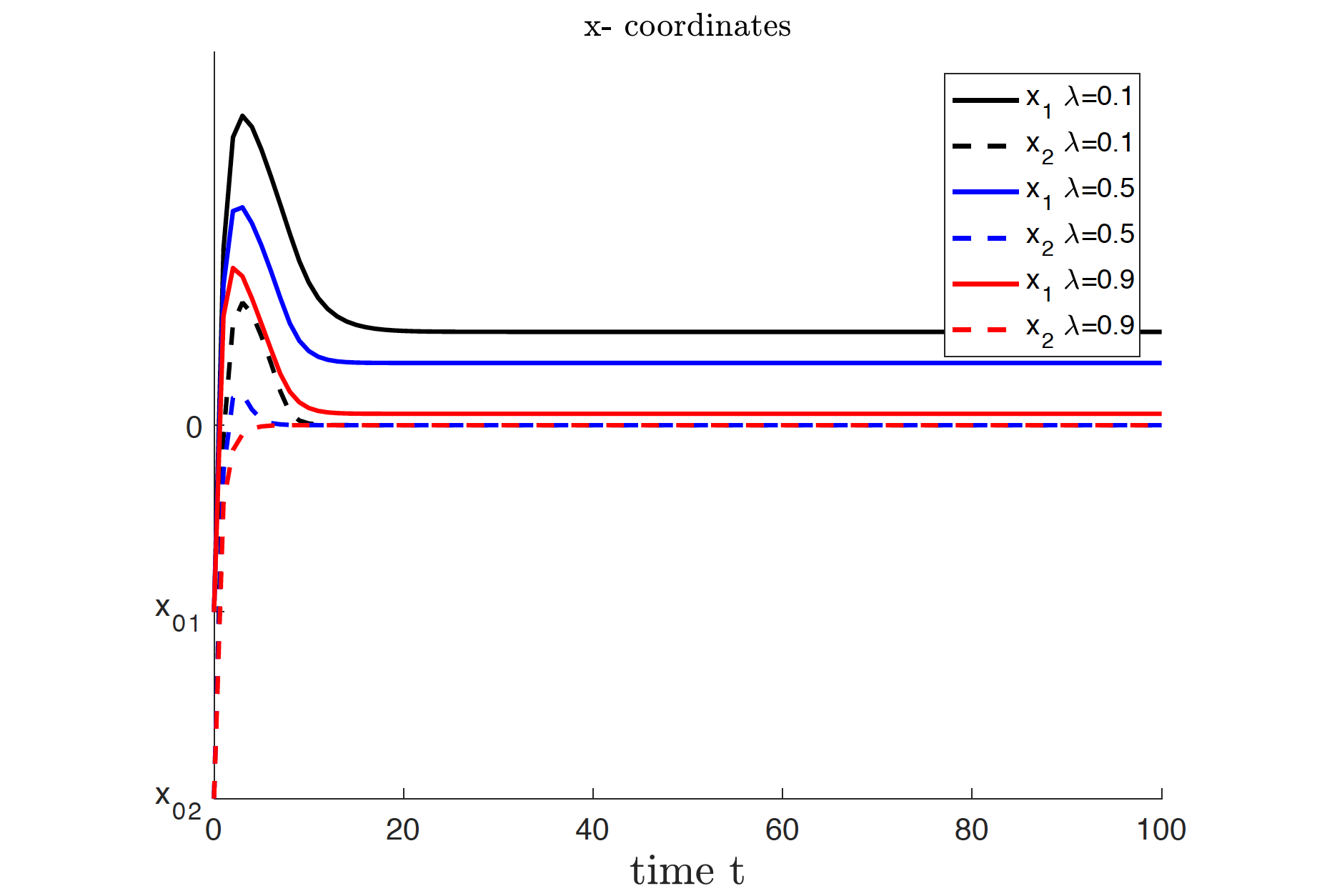}	
	\end{subfigure}
	\hspace{0.1em} 
	\begin{subfigure}{}
		\includegraphics[width=8.2cm]{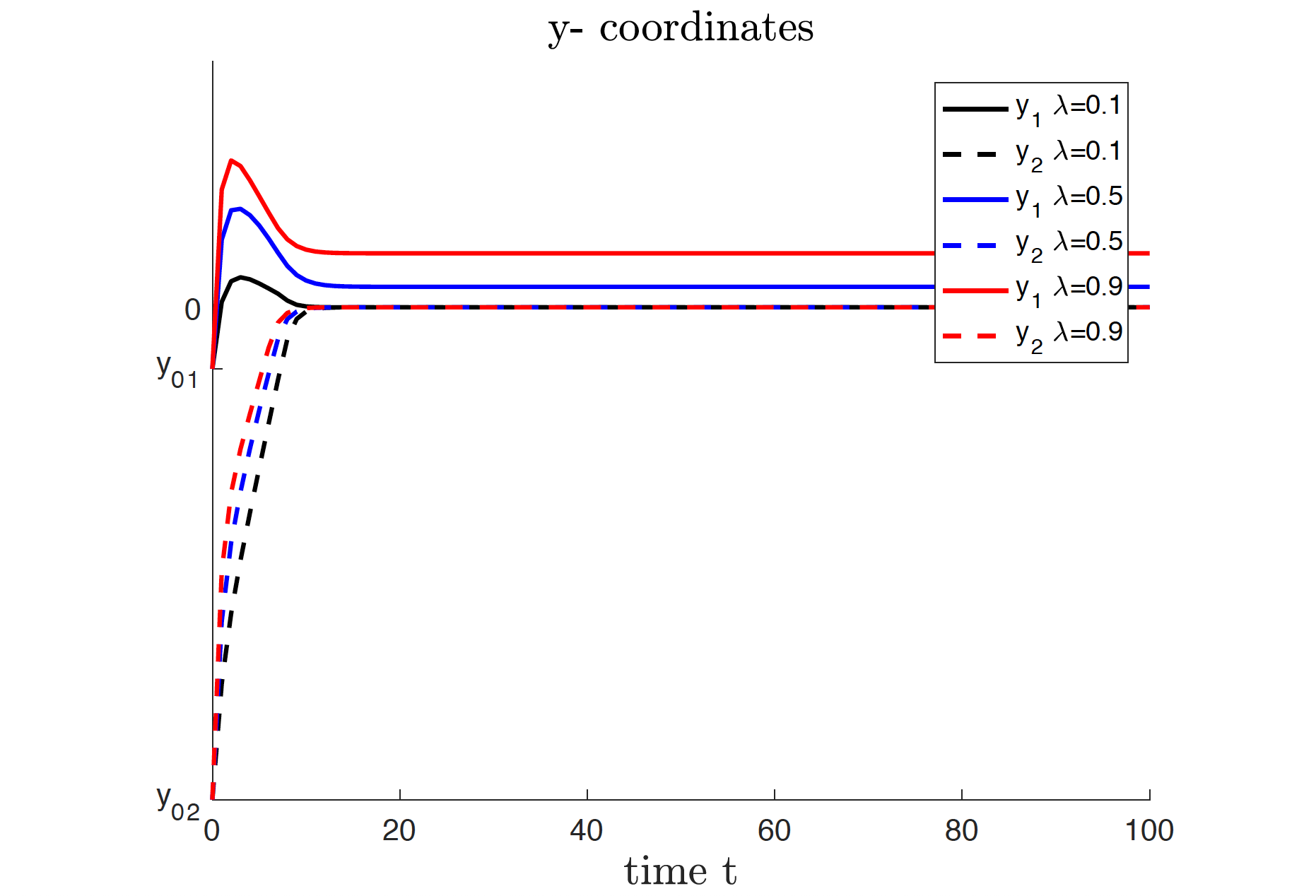}
	\end{subfigure}
	\caption{\emph{On the left:} $x(t)\in \R^2$ converges to $\bar{x}=(0.5,\:0)$, for $\lambda=0.1$ (black), to $\bar{x}=(0.3333,\:0)$, for $\lambda=0.5$ (blue), and to $\textbf{}(0.0612,\:0)$, for $\lambda=0.9$ (red). \emph{On the right:} $y(t)\in \R^2$ converges to $\bar{y}=(0,\:0)$, for $\lambda=0.1$ (black), to $\bar{y}=(0.1666,\:0)$, for $\lambda=0.5$ (blue), and to $\bar{y}=(0.4388,\:0)$ for $\lambda=0.9$ (red).}
	\label{plots3}
\end{figure}

The first observation was that the rate of implicity, which is given by the the value of $\lambda$ in the second component of the system (\ref{dynpalmmu1}), plays a crucial role in determining to which critical point the solution trajectory converges, provided to be in a setting where the stepsizes 
$$c_1:=\gamma_1 L \text{ and } c_2:=\gamma_2 L$$
are chosen such that (\ref{g1}) is fulfilled for all $\lambda \in [0,\:1]$ (see Figure \ref{plot1}). One should also remark that the rate of implicitness seems to influence the stability of convergence also in dependence of the shape of the problem. Here, for higher values of $\lambda$, 
this means for more explicit systems, the trajectory converges faster, while for more implicit systems, which arise when $\lambda$ is close to $0$, the trajectory first moves away from the limit set until the convergence starts, as one can see as a peak in the functions for the $x$-coordinates. For an overview over the limit points the different trajectories converge to we refer to Figure \ref{limits}.

\begin{figure}[H]
	\centering
	\begin{subfigure}{} 
		\includegraphics[width=8cm]{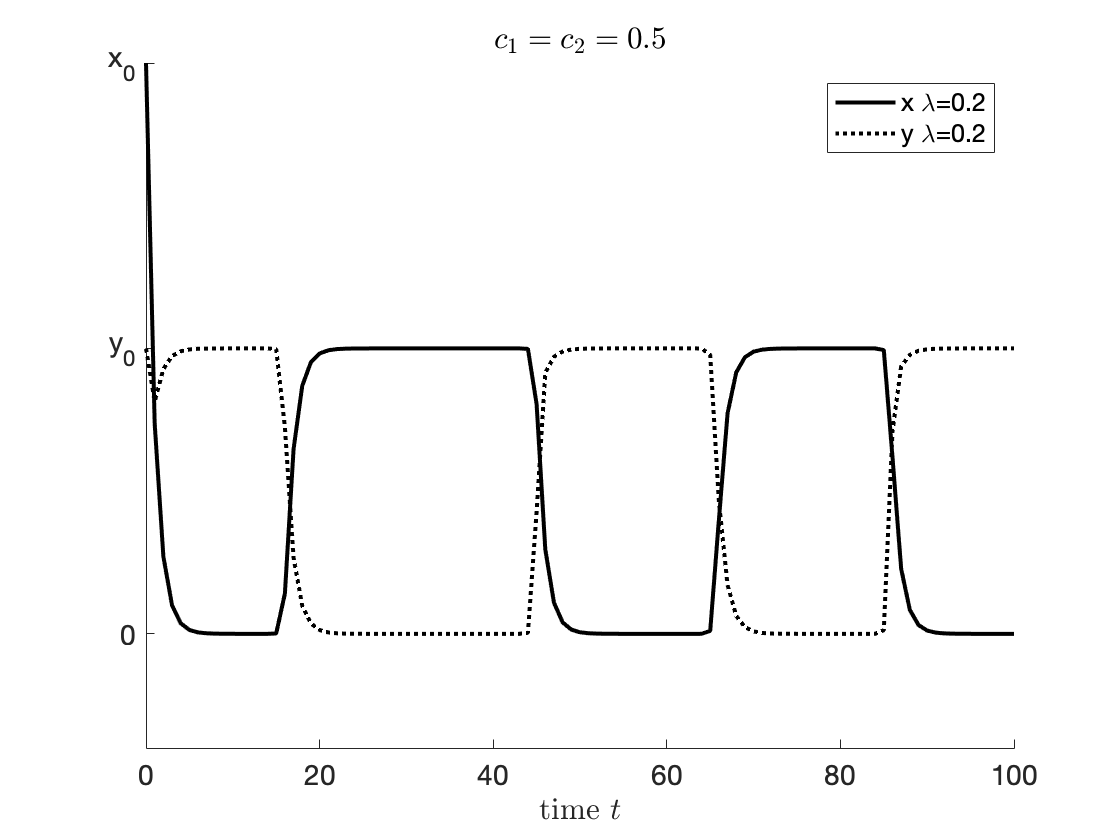}	
	\end{subfigure}
	\hspace{0.5em} 
	\begin{subfigure}{}
		\includegraphics[width=8cm]{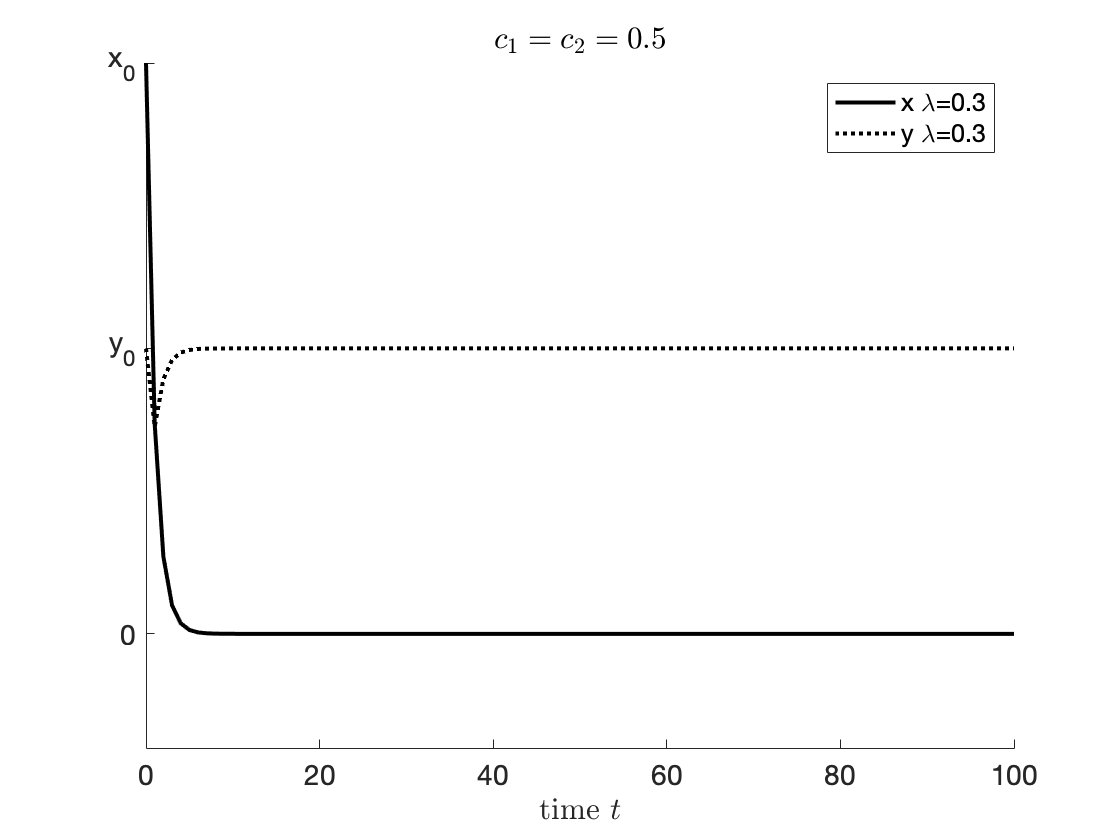}
	\end{subfigure}
	\\
	\begin{subfigure}{}
		\includegraphics[width=8cm]{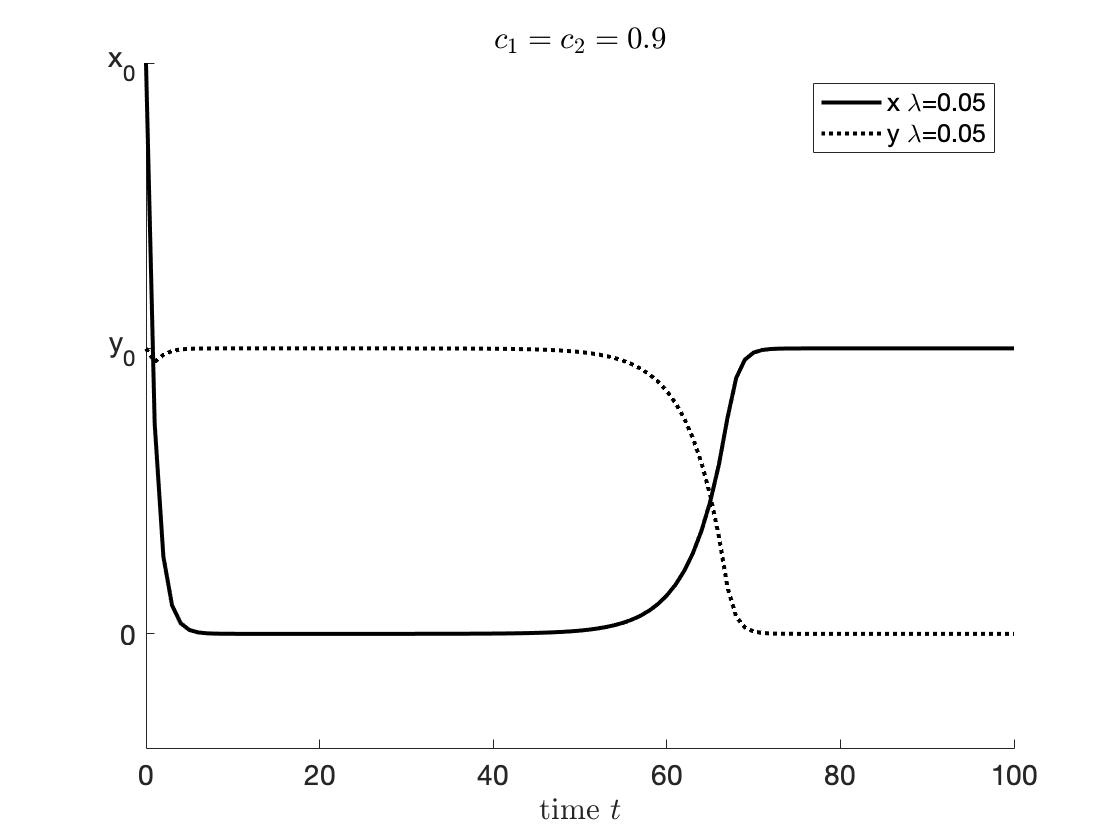}	
	\end{subfigure}
	\hspace{0.5em} 
	\begin{subfigure}{}
		\includegraphics[width=8cm]{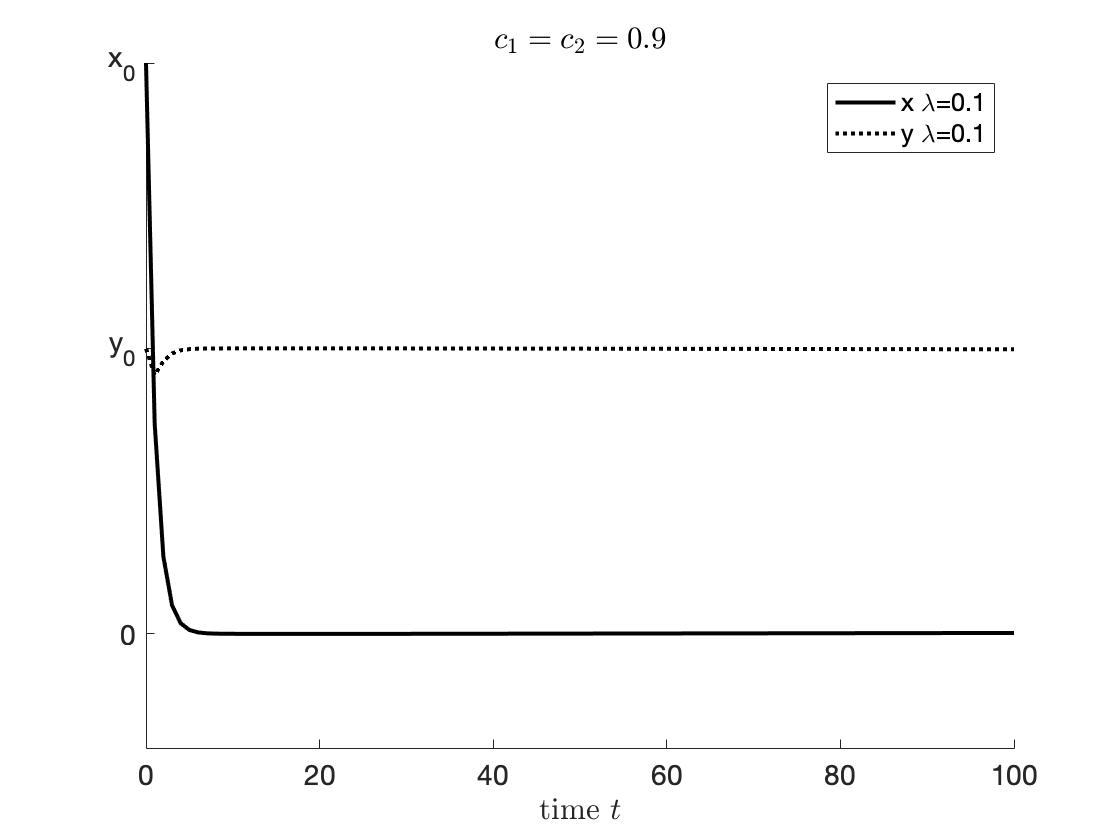}
	\end{subfigure}
	\caption{\emph{Top row:} on the left, $(x(t),y(t))$ moves periodically between the critical points $(0,\:0.5)$ and $(0.5,\:0)$ for $c_1=c_2=0.5$ and $\lambda = 0.2$.  On the right one can see the trajectory converging for $c_1=c_2=0.5$ and $\lambda=0.3$. \emph{Bottom row:} on the left, $(x(t),y(t))$ shows periodic behavior for $c_1=c_2=0.9$ and $\lambda = 0.05$, while on the right, for $\lambda=0.1$, the trajectory converges.}
	\label{plots2}
\end{figure}

In addition to the above observations we further investigated the problem (\ref{experiment1}) in higher dimensions, namely
\begin{equation}\label{experiment2}
\min_{(x,y) \in \R^2 \times \R^2} \Psi(x,y) := \|x\|_1+\|y\|_1+(1-x_1-x_2-y_1-y_2)^2,
\end{equation}
in order to emphasize the different convergence behavior of the solution trajectories with respect to the parameter $\lambda$. Here,$f,\:g: \R^2 \to \R$ are both given by the $1$-norm  and $H: \R^4 \to \R$ is of the form $H(x,y)=(1-x_1-x_2-y_1-y_2)^2$, where $x=(x_1,\:x_2)$, $y=(y_1,\:y_2)$. The set of critical points of $\Psi$ are given by the set $$\crit(\Psi) = \left\{(\bar{x},\bar{y}) \in \R^2_+ \times \R^2_+: \bar x_1 +\bar x_2 +\bar y_1 +\bar y_2 =\frac{1}{2} \right\},$$
where all elements of this set are optimal solutions for (\ref{experiment2}). We chose as starting value $(x_0,y_0)=\left(-1,\:-2,\:-\frac{1}{2},-4\right)$. 
Again, we were able to observe that for $c_1=c_2=5$, which is a setting in which (\ref{g1}) is fulfilled for all $\lambda \in [0,\:1]$, the limit points of the trajectory are determined by the choice of $\lambda$. This behaviour can be seen in Figure \ref{plots3}. Once more one can notice that also here the more explicit the system is (i.e. the closer $\lambda$ is to 1), the more stable the trajectory converges to a limit point. 

Moreover, we could observe that the validity of inequality (\ref{g1}), which in our special setting is of the form
$$ \min\{c_1 L, c_2L\} > \left(6+\frac{4}{c_1^2L^2}+\frac{4+24(1-\lambda)^2}{c_2^2L^2}+\frac{16(1-\lambda)^2}{c_1^2c_2^2L^4}\right)^{\frac{1}{2}}(1+\lambda+\lambda^2)+\frac{\lambda}{2},$$
is essential for the solution trajectory to converge. The plots of Figure \ref{plots2} show that the smaller $\lambda$ is chosen, the bigger the stepsizes have to be: $c_1=c_2=0.5$ just provide convergence for $\lambda \in [0.3,\:1]$, while for the very implicit system, i.e. for $\lambda \in [0,\:0.3)$ it fails. Bigger stepsizes $c_1=c_2=0.9$ assure convergence for a wider range of implicity, i.e. for $\lambda \in [0.1,\:1]$, while choosing $c_1=c_2=1$ guarantees convergence for all $\lambda \in [0,\:1]$.
\end{example}

\begin{example}
	In the second numerical experiment we considered the minimization problem 
	\begin{equation*}
	\min_{(x,y) \in \R \times \R} \Psi(x,y) := |x|+L_2(y)-\frac{1}{5}(1-x-y)^2,
	\end{equation*}
	where, for $\alpha > 0$,
	$$L_\alpha : \R \rightarrow \R, \ L_{\alpha}(y):=\begin{cases} \frac{y^2}{2}, \quad &|y|\leq \alpha \\ \alpha|y|-\frac{\alpha^2}{2} \quad &\text{otherwise}, \end{cases}$$
defines the \emph{Huber function}. Taking $f: \R \to \R$, $f(x):=|x|$, and $g: \R \to \R$, $g(y):=L_2(y)$, as well as $H: \R \times \R \to \R$, $H(x,y):=(1-x-y)^2$, we are again in the setting of problem (\ref{dynpalmmu1}). The set of critical points of $\Psi$ is given by
	$$\crit(\Psi) = \left\{\left(-\frac{1}{2},\:1\right); \left(0,\:-\frac{2}{3}\right); \left(\frac{5}{2},\:1\right)\right\},$$
while $\left(0,\: -\frac{2}{3}\right)$ is the optimal solution.
	
	\begin{figure}[H]
		\centering
		\begin{subfigure}{} 
			\includegraphics[width=8.1cm]{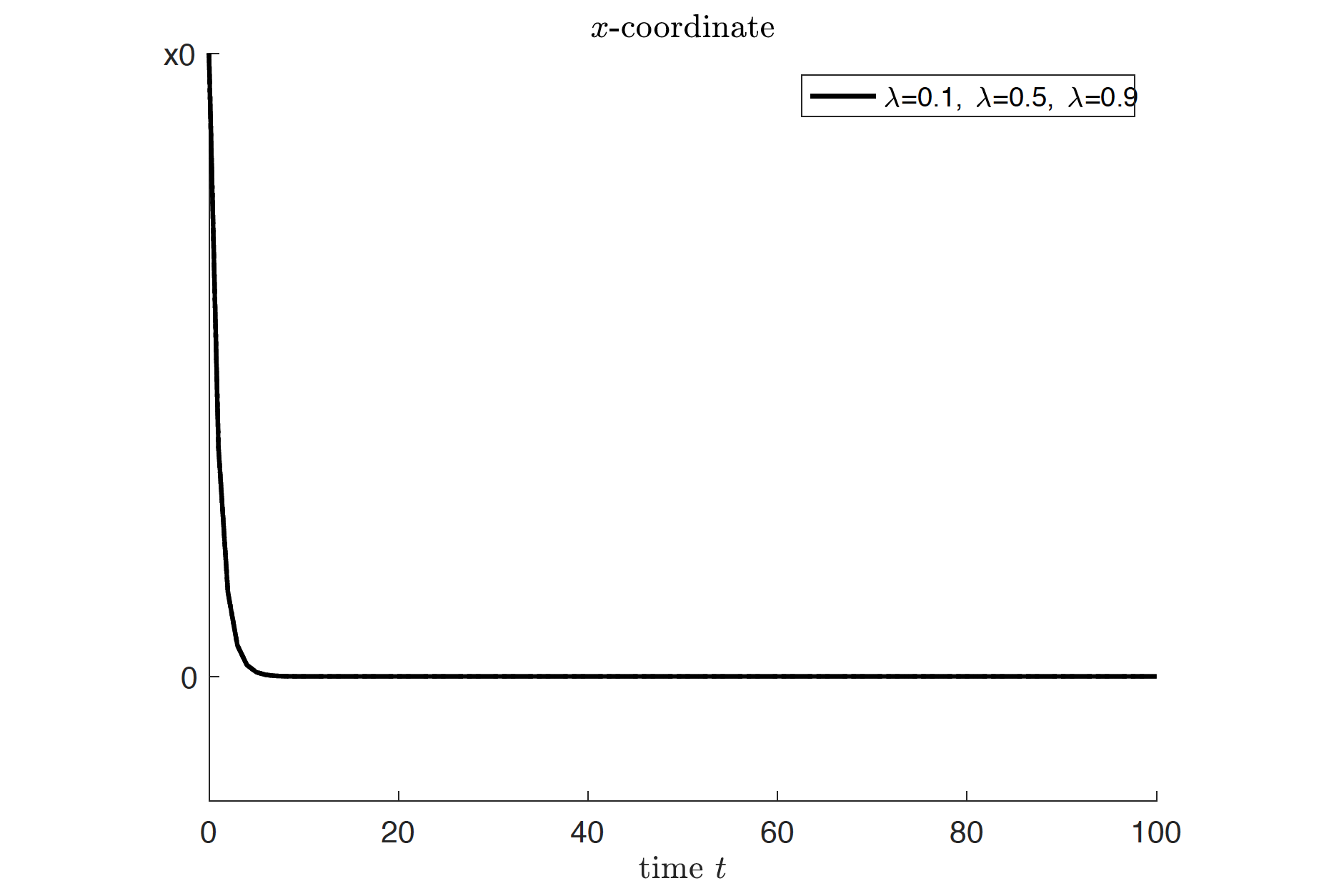}	
		\end{subfigure}
		\hspace{0.2em} 
		\begin{subfigure}{}
			\includegraphics[width=8.1cm]{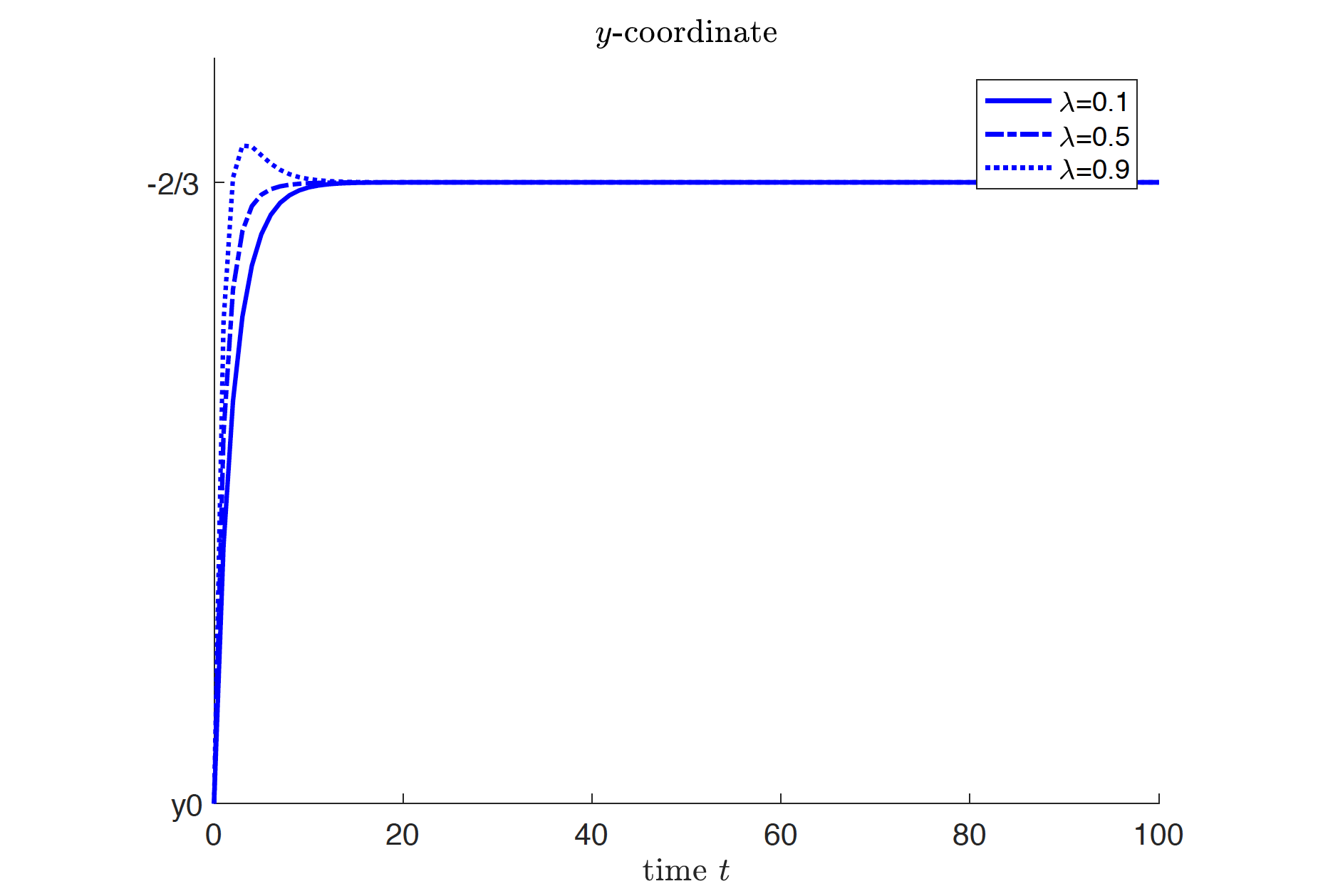}
		\end{subfigure}
		\caption{\emph{On the left:} $x(t)$ converges to $\bar{x}=0$, for $\lambda=0.1$, $\lambda=0.5$, $\lambda=0.9$. \emph{On the right:} $y(t)$ converges to $\bar{y}=-\frac{2}{3}$ with different convergence behavior, for $\lambda=0.1$, $\lambda=0.5$, $\lambda=0.9$}
		\label{plots4}
	\end{figure}
	In Figure \ref{plots4} again the $x$- and $y$-solution trajectories for three dynamical systems with different rates of implicitness are plotted. As starting value $(x_0,\:y_0)=(1,\:-1)$ was chosen. The Lipschitz constant of $H$ allows to choose as stepsizes $c_1=c_2=0.3$ in order to verify 
	\eqref{g1}. One can observe that the values of $\lambda$ do not affect the convergence of the $x$-trajectories at all, while for the $y$-trajectories the implicitness has influence on the stability of convergence. For the relatively explicit system with $\lambda=0.9$ one can see that the solution 
	first moves away from the limit point, before the convergence starts. The trajectory corresponding to $\lambda=0.5$ reaches the limit point the fastest.  
\end{example}


\begin{thebibliography}{9}
	   \bibitem{abb1} B. Abbas, H. Attouch, \emph{Dynamical systems and forward-backward algorithms associated with the sum of a convex subdifferential and a monotone cocoercive operator}, Optimization 64(10), 2223--2252 (2015)
		\bibitem{abb} B. Abbas, H. Attouch, B. Svaiter, \emph{Newton-like dynamics and forward-backward methods for structured monotone inclusions in Hilbert spaces}, Journal of Optimization Theory and Applications 161, 331--360 (2014)
		\bibitem{al} F. Alvarez, H. Attouch, J. Bolte, P. Redont, \emph{A second-order gradient-like dissipative dynamical system with Hessian-driven damping. Application to optimization and mechanics}, Journal de Mathématiques Pures et Appliquées (9) 81(8), 747--779 (2002)
		\bibitem{ab} H. Attouch, J. Bolte. \emph{On the convergence of the proximal algorithm for nonsmooth functions involving analytic features}, Mathematical Programming 116(1) Series B, 5--16 (2007)
		\bibitem{ant} A.S. Antipin, \emph{Minimization of convex functions on convex sets by means of differential equations}, (Russian) Differentsial'nye Uravneniya 30(9), 1475--1486 (1994)
		\bibitem{att3} H. Attouch, J. Bolte, B. F. Svaiter, \emph{Convergence of descent methods for semi-algebraic and tame problems: proximal algorithms, forward-backward splitting, and regularized Gauss-Seidel methods}, Mathematical Programming 137(1-2) Series A, 91–-129 (2013)
		\bibitem{att} H. Attouch, G. Buttazzo and G. Michaille, \emph{Variational Analysis in Sobolev and BV Spaces: Applications to PDEs and Optimization}, MAOS-SIAM Series on Optimization, Philadelphia (2014)
		\bibitem{att2} H. Attouch, B. F. Svaiter, \emph{A continuous dynamical Newton-like approach to solving monotone inclusions}, SIAM Journal on Control  and Optimization 49, 574--598 (2011)
		\bibitem{bai} J.B. Baillon, H. Brézis, \emph{Une remarque sur le comportement asymptotique des semigroupes non linéares}, Houston Journal of Mathematics 2(1), 5--7 (1976)
		\bibitem{ban} S. Banert, R.I. Bo\c{t}, \emph{A forward-backward-forward differential equation and its asymptotic properties}, Journal of Convex Analysis 25(2), 371--388 (2018)
		\bibitem{bc} H.H. Bauschke, P.L. Combettes, \emph{Convex Analysis and Monotone Operator Theory in Hilbert Spaces}, CMS Books in Mathematics, Springer (2017)
		\bibitem{bol1} J. Bolte, \emph{Continuous gradient projection method in Hilbert spaces}, Journal of Optimization Theory and its Applications 119(2), 235--259 (2003)
		\bibitem{bol2} J. Bolte, A. Daniilidis, A. Lewis, \emph{The  \L ojasiewicz inequality for nonsmooth subanalytic functions with applications to subgradient dynamical systems}, SIAM Journal on Optimization 17(4), 1205–-1223 (2006)
		\bibitem{bol} J. Bolte, S. Sabach, M. Teboulle, \emph{Proximal alternating linearized minimization for nonconvex and nonsmooth problems}, Mathematical Programming 146 (1-2) Series A, 459..494 (2014) 
		\bibitem{bot1} R.I. Bo\c{t}, E.R. Csetnek, \emph{A dynamical system associated with the fixed points set of a nonexpansive operator}, Journal of Dynamics and Differential Equations 29(1), 155--168  (2017)
		\bibitem{bot} R.I. Bo\c{t}, E.R. Csetnek, \emph{A forward-backward dynamical approach to the minimization of the sum of a nonsmooth convex with a smooth nonconvex function}, ESAIM: Control, Optimisation and Calculus of Variations 24(2), 463--477 (2018)
		\bibitem{bot3}  R.I. Bo\c{t}, E.R. Csetnek, S.C. L\'aszl\'o, \emph{An inertial forward-backward algorithm for the minimization of the sum of two nonconvex functions}, EURO Journal on Computational Optimization 4(1), 3--25 (2016) 
		\bibitem{bot4}  R.I. Bo\c{t}, E.R. Csetnek, S.C. L\'aszl\'o, \emph{Approaching nonsmooth nonconvex minimization through second order proximal-gradient dynamical systems}, Journal of Evolution Equations 18(3), 1291--1318 (2018) 
		\bibitem{bot2}  R.I. Bo\c{t}, E.R. Csetnek, S.C. L\'aszl\'o, \emph{A primal-dual dynamical approach to structured convez minimization problems}, arXiv:1905.08290 [math.OC] (2019)
		\bibitem{brez} H. Brézis, \emph{Operateurs Maximaux Monotones: Et Semi-Groupes De Contractions Dans Les Espaces De Hilbert}, North-Holland Mathematics Studies 5, North Holland (1973)
		\bibitem{bru} R.E. Bruck, Jr., \emph{Asymptotic convergence of nonlinear contraction semigroups in Hilbert space}, Journal of Functional Analysis 18, 15--26 (1975)
		\bibitem{cran} M.G. Crandall, A. Pazy, \emph{Semi-groups of nonlinear contractions and dissipative sets}, Journal of Functional Analysis 3, 376--418 (1969)
		\bibitem{cse} E.R. Csetnek, Y. Malitsky, M. K. Tam, \emph{Shadow Douglas-Rachford Splitting for Monotone Inclusions}, Applied Mathematics and Optimization 80, 665--678 (2019)
		\bibitem{dav} D. Davis, D. Drusvyatskiy, S. Kakade, J. D. Lee, \emph{Stochastic subgradient method converges on tame functions}, Foundations of Computational Mathematics (2019)
		\bibitem{haraux} A. Haraux, \emph{Systémes Dynamiques Dissipatifs et Applications}, Recherches en Mathématiques Appliquées 17, Masson, Paris (1991)
		\bibitem{haraux2} A. Haraux, M. Jendoubi, \emph{Convergence of solutions of second-order gradient-like systems with analytic nonlinearities}, Journal of Differential Equations 144(2), 313--320  (1998)
\bibitem{lipong} G. Li, T.K. Pong, \emph{Calculus of the exponent of the Kurdyka-\L ojasiewicz inequality and its applications to linear convergence of first-order methods}, Foundations of Computational Mathematics 18(5), 1199-1232 (2018)
		\bibitem{loj} S.  Lojasiewicz, \emph{Une Propriété Topologique des Sous-Ensembles Analytiques Réels}, Les Équations aux Dérivées Partielles, Éditions du Centre National de la Recherche Scientifique Paris, 87--89 (1963)
		\bibitem{mor} B. Mordukhovich, \emph{Variational Analysis and Generalized Differentiation}, Springer, Berlin (2006)
		\bibitem{rocky} R.T. Rockafellar, R. Wets, \emph{Variational Analysis}, Springer, Berlin (1998)
		\bibitem{sim} L. Simon, \emph{Asymptotics for a class of nonlinear evolution equations, with applications to geometric problems}, Annals of Mathematics (2)118, 525--571 (1983)
		\bibitem{teschl} G. Teschl, \emph{Ordinary Differential Equations and Dynamical Systems}, Graduate Studies in Mathematics 140, American Mathematical Society (2012)
\end{thebibliography}
\end{document}